\newtheorem{remark}{Remark}
\newtheorem{lemma}{Lemma}
\newtheorem{fact}{Fact}
\newtheorem{theorem}{Theorem}
\newcommand{\diamn}[1]{\langle#1\rangle}
\newcommand{\boxn}[1]{[#1]}
\newcommand{\GLP}{\mathbf{GLP}}
\newcommand{\Olarge}{\mathcal{O}}
\newcommand{\cmp}{\mathbf{c}}
\newcommand{\ordcmp}{\mathbf{oc}}
\newcommand{\ordlog}{\ell}
\newcommand{\cmpscr}[1]{\textsl{\texttt{#1}}}
\newcommand{\Ordlog}{\ell}
\newcommand{\width}{\mathbf{w}}
\newcommand{\codes}{\mathbf{C}}
\newcommand{\ev}{\mathbf{ev}}
\begin{document}
\title{On the complexity of the closed fragment of Japaridze's provability logic}
\author{Fedor Pakhomov\thanks{This work was partially supported by RFFI grant 12-01-00888\_a and Dynasty foundation.}\\Steklov Mathematical Institute,\\Moscow\\ \texttt{pakhfn@mi.ras.ru}}
\date{May 2013}

\maketitle

  \begin{abstract} We consider well-known provability logic $\mathbf{GLP}$. We prove that the $\GLP $-provability problem for variable-free polymodal formulas is $\textsc{PSPACE}$-complete. For a number $n$, let $L^n_0$ denote the class of all polymodal variable-free formulas without modalities $\langle n \rangle,\langle n+1\rangle,\ldots$. We show that, for every number $n$, the $\GLP $-provability problem for formulas from $L^n_0$ is in $\textsc{PTIME}$. 
  \end{abstract}

\section{Introduction} There are some works about computational complexity of provability logics. R.~Ladner in \cite{Lad77} has shown that some logics, including $\mathbf{S4}$, $\mathbf{K}$, and $\mathbf{T}$ have $\textsc{PSPACE}$-complete decision problem. Even though the G\"odel-L\"ob logic $\mathbf{GL}$ was not mentioned in \cite{Lad77} is easy to prove that $\mathbf{GL}$ has a $\textsc{PSPACE}$-complete decision problem. Later it was shown that the $\mathbf{GL}$-provability problem for the formulas with at most one free variable is $\textsc{PSPACE}$-complete \cite{ChagRyb02}\cite{Svej03}. The $\mathbf{GL}$-provability problem for variable-free modal formulas lies in $\textsc{PTIME}$ \cite{ChagRyb02}.

I.~Shapirovsky proved that the decision problem for the Japaridze's logic $\GLP $ lies in $\textsc{PSPACE}$ \cite{Shap08}. Therefore, both the $\GLP $-provability problem for all polymodal formulas and the $\GLP $-provability problem for polymodal formulas with at most one free variable are $\textsc{PSPACE}$-complete.  E.~Dashkov considered the class of all formulas of the form 
$\varphi\longleftrightarrow \psi,$
were $\varphi$ and $\psi$ are built from the logical constant $\top$, conjunction, propositional variables and modalities $\diamn{n}$; he has shown that there exists polynomial time algorithm for the $\GLP $-provability problem for formulas from this class \cite{Dash12}. 

\section{The Logic $\GLP$}

{\it The language of the polymodal provability logic $\GLP$} consists of all formulas well-built of $\top$ (propositional constant for truth), $\bot$ (propositional constant for false), $\land$, $\lor$, $\lnot$, $\to$, $\diamn{0}$, $\diamn{1}$,$\ldots$, $x_0$, $x_1$,$\ldots$ (every natural number can be an index of diamond and an index of variable). We denote this language by $L^\omega_\omega$. Axioms and inference rules of $\GLP$ are
\begin{enumerate}

\setcounter{enumi}{-1}

\item axioms of $\mathbf{PC}$(Propositional Calculus);

\item $\diamn{n} (A\lor B) \to (\diamn{n} A \lor \diamn{n} B)$;

\item $\lnot \diamn{n} \lnot \top$;

\item $\diamn{n} A \to \diamn{n} (A \land \lnot \diamn{n} A)$;

\item $\diamn{n} A \to \diamn{k} A$, for $k\le n$;

\item $\diamn{k} A \to \lnot \diamn{n} \lnot \diamn{k} A$, for $k<n$;

\item $\genfrac{}{}{0.2pt}{0}{A \qquad A\to B}{B}$;

\item $\genfrac{}{}{0.2pt}{0}{A \to B}{\diamn{n} A \to \diamn{n} B}$.
\end{enumerate}

Below we give well-known arithmetical semantics for the logic $\GLP$. We will omit some details of the construction of this semantics; look in \cite{Bek05} for more information. The arithmetical semantics for $\GLP$ were introduced by G.~Japaridze \cite{Jap86} (this semantics is somewhat different from the one we present here). 

Let $L_{\mathrm{FA}}$ be the language of the first-order Peano arithmetic $\mathbf{PA}$, i.e. $L_{\mathrm{FA}}$ is the set of all closed first-order formulas over  the  signature $(=,0,S,+,\cdot)$. All first order theories that we will consider below in the article are theories in this language. $\mathrm{RFN}_n(\mathbf{T})$ is the naturally selected $L_{\mathrm{FA}}$ proposition saying that all $\Sigma_n$ consequences of the recursively axiomatizable theory $\mathbf{T}$ are true. Note that for every recursively axiomatizable theory $\mathbf{T}$, the proposition $\mathrm{RFN}_0(\mathbf{T})$ is the proposition saying that $\mathbf{T}$ is consistent.

 Suppose $\mathbf{T}$ is a recursively axiomatizable first order theory in the signature of $\mathbf{PA}$.  We consider {\it evaluations of $L^\omega_\omega$ formulas} $*\colon L^\omega_\omega\to L_{\mathrm{FA}}$, $\varphi\longmapsto \varphi^{*}$ which enjoys following properties:
\begin{enumerate}
\item $\top^{*}$ is a $\mathbf{T}$-provable proposition, $\bot^{*}$ is a $\mathbf{T}$-disprovable proposition;
\item $(\lnot \varphi)^{*}=\lnot \varphi^{*}$, $(\varphi\land \psi)^{*}=\varphi^{*}\land\psi^{*}$, $(\varphi\lor \psi)^{*}=\varphi^{*}\lor\psi^{*}$, $(\varphi\to \psi)^{*}=\varphi^{*}\to\psi^{*}$, for all $\varphi,\psi\in L^\omega_\omega$;
\item $(\diamn{n}\varphi)^{*}=\mathrm{RFN}_n(\mathbf{T}+\varphi^{*})$, for all $\varphi\in L^\omega_\omega$.
\end{enumerate}

It's known that for omega-correct theories $\mathbf{T}\supset \mathbf{PA}$ the correctness and completeness  theorem for the logic $\GLP$ holds, i.e. for every formula $\varphi\in L^\omega_\omega$
$$\GLP\vdash \varphi \iff \mbox{ for every correct evaluation $*$ we have } \mathbf{T}\vdash \varphi^{*}.$$

Principal applications of the logic $\GLP $ are in proof theory. Consider polymodal formulas of the form 
$$\diamn{n_0}\diamn{n_1}\ldots\diamn{n_{k-1}}\top,$$
where $k\ge 0$ and $n_0,\ldots,n_{k-1}\ge 0$; formulas of this form are known as {\it words}. Those formulas corresponds to arithmetical propositions that are known as {\it iterated reflection principles}
$$\mathrm{RFN}_{n_1}(\mathbf{T}+\mathrm{RFN}_{n_2}(\mathbf{T}+\mathrm{RFN}_{n_3}(\ldots(\mathbf{T}+\mathrm{RFN}_{n_k}(\mathbf{T}))\ldots)).$$ 
This correspondence simplifies the investigation of iterated reflection principles. Iterated reflection principles  were used to obtain a characterization of $\Pi_n$-consequences of $\mathbf{PA}$ and of some fragments of $\mathbf{PA}$. Also iterated reflection principles was used in the proof of the independence of $\mathbf{PA}$ for Beklemishev's Worm Principle \cite{Bek05}. 

For our further purposes we need one simple fact. It is clear that for every word $\alpha$ the theory $\mathbf{PA} +\alpha^{*}$ is omega-correct. Henceforth from G\"odels second incompleteness theorem for omega-correct theories and completeness theorem for $\GLP$ it follows that 
\begin{fact} \label{GLP_not_vdash_fact} Suppose $\alpha$ is a word. Then $$\GLP\not \vdash \alpha\to\diamn{0}\alpha.$$
\end{fact}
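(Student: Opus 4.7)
The plan is a direct application of the arithmetical completeness theorem for $\GLP$ combined with G\"odel's second incompleteness theorem, exactly along the lines the excerpt suggests.

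First I would argue by contradiction: suppose $\GLP\vdash\alpha\to\diamn{0}\alpha$. Fix a correct arithmetical evaluation $*$ with base theory $\mathbf{T}=\mathbf{PA}$. By the completeness part of the Japaridze--Beklemishev theorem quoted above, $\mathbf{PA}\vdash(\alpha\to\diamn{0}\alpha)^{*}$, which by the clauses defining an evaluation unfolds to
\[
  \mathbf{PA}\vdash \alpha^{*}\to\mathrm{RFN}_{0}(\mathbf{PA}+\alpha^{*}).
\]
Since $\mathrm{RFN}_{0}(\mathbf{T})$ is exactly the consistency statement $\mathrm{Con}(\mathbf{T})$, this is the same as $\mathbf{PA}+\alpha^{*}\vdash\mathrm{Con}(\mathbf{PA}+\alpha^{*})$.

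Next I would invoke the fact stated just before the lemma: for every word $\alpha$, the theory $\mathbf{PA}+\alpha^{*}$ is $\omega$-correct, and in particular consistent and recursively axiomatizable. Now G\"odel's second incompleteness theorem applies to $\mathbf{PA}+\alpha^{*}$ and yields
\[
  \mathbf{PA}+\alpha^{*}\not\vdash\mathrm{Con}(\mathbf{PA}+\alpha^{*}),
\]
contradicting the displayed consequence of our assumption.

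The only non-routine step is making sure that the evaluation of $\diamn{0}\alpha$ really does coincide with the consistency statement of the theory $\mathbf{PA}+\alpha^{*}$, and that this theory genuinely falls within the scope of the second incompleteness theorem; both points are handled by the remark in the excerpt that $\mathrm{RFN}_{0}(\mathbf{T})$ is the consistency assertion for any recursively axiomatizable $\mathbf{T}$ and that $\mathbf{PA}+\alpha^{*}$ is $\omega$-correct (hence consistent). Everything else is a direct unfolding of definitions, so I do not anticipate any real obstacle.
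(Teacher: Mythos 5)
Your proof is correct and takes essentially the same route the paper sketches: arithmetical soundness of $\GLP$ turns the assumed provability of $\alpha\to\diamn{0}\alpha$ into $\mathbf{PA}+\alpha^{*}\vdash\mathrm{Con}(\mathbf{PA}+\alpha^{*})$, which contradicts G\"odel's second incompleteness theorem because $\mathbf{PA}+\alpha^{*}$ is $\omega$-correct (hence consistent) and recursively axiomatizable. The only quibble is terminological: the direction of the arithmetical correctness-and-completeness theorem you invoke is the soundness (correctness) half, not the completeness half.
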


All these results mainly exploited  properties of the variable-free fragment of $\GLP $. The variable-free fragment of the logic $\GLP $ is expressive enough to describe a lot of properties of words.

For an ordinal $\alpha\le \omega$ we denote by $L^\alpha_0$ the set of all formulas built from the logical constant $\top$, constant $\bot$, conjunction, disjunction, implication, negation and modalities $\diamn{n}$ for natural numbers $n<\alpha$. For an ordinal $\alpha\le \omega$ we denote by $\GLP^\alpha_0$ the set of all $\GLP$-provable formulas from $L^\alpha_0$. We investigate computational complexity for languages $\GLP ^\alpha_0$.

\section{The fragment $\GLP ^\omega_0$}
\begin{theorem} \label{completeness_theorem} The language $\GLP ^\omega_0$ is $\textsc{PSPACE}$-complete.
\end{theorem}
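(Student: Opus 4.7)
The upper bound $\GLP^\omega_0\in\textsc{PSPACE}$ is immediate: Shapirovsky \cite{Shap08} already established $\textsc{PSPACE}$ decidability for the full language of $\GLP$ with variables, and $\GLP^\omega_0$ is merely a sublanguage. So the substance of the theorem is the lower bound.

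For $\textsc{PSPACE}$-hardness, the plan is to reduce from the true quantified Boolean formula problem (\textsc{QBF}). Given an instance $\Phi=Q_1p_1\cdots Q_np_n\,\phi$, I would construct in polynomial time a closed formula $\Theta(\Phi)\in L^\omega_0$ such that $\Phi$ is true if and only if $\GLP\vdash\Theta(\Phi)$. The guiding idea is that although we have no propositional variables at our disposal, we do have infinitely many modalities $\diamn{n}$. Each Boolean variable $p_i$ can be represented by a modal assertion involving $\diamn{f(i)}$ applied to a suitable scaffold word, with the two truth values of $p_i$ corresponding to two mutually inconsistent modal ``witnesses'' $W_i^0,W_i^1$. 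Quantifier alternation is then mirrored by alternating disjunctions and conjunctions: $\exists p_i$ becomes a disjunction over the two choices of witness, and $\forall p_i$ a conjunction. This is in the spirit of Ladner's classical argument for $\mathbf{K}$, but with the modalities $\diamn{i}$ playing the role of propositional variables.

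The main obstacle will be handling the axiomatic interaction between modalities --- particularly axiom 4, $\diamn{n}A\to\diamn{k}A$ for $k\le n$, and axiom 5, $\diamn{k}A\to\lnot\diamn{n}\lnot\diamn{k}A$ for $k<n$ --- which couples different $\diamn{i}$ in ways that any encoding must respect. To keep the encoded ``variables'' independent, I would place the indices $f(i)$ sufficiently far apart and interleave the witnesses with ``firewall'' modalities so that the $\GLP$ axioms cannot force unintended propagation between distinct $p_i$. A crucial source of genuine independence is Fact \ref{GLP_not_vdash_fact}: the unprovability of $\alpha\to\diamn{0}\alpha$ for words $\alpha$ guarantees that any consistent word-like formula can be nontrivially extended, which is what ensures the case-split encoding is faithful. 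Once the reduction is properly insulated, soundness (if $\GLP\vdash\Theta(\Phi)$ then $\Phi$ is true) would follow from the arithmetical semantics together with a suitable evaluation of closed formulas, while completeness would be proved by building an explicit $\GLP$-derivation that follows the evaluation tree of $\Phi$.
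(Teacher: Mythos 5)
The overall shape of your reduction is the right one---QBF as the source problem, words over well-separated modalities as surrogate variables, Fact~\ref{GLP_not_vdash_fact} to secure independence---and the upper bound is correctly dispatched via Shapirovsky. But the core of your lower bound, ``$\exists p_i$ becomes a disjunction over the two choices of witness, and $\forall p_i$ a conjunction,'' does not yield a polynomial-time reduction: each quantifier elimination duplicates the entire remaining body, so after $n$ quantifiers the output formula has size $\Omega(2^n)$. This blow-up is precisely the difficulty that any $\textsc{PSPACE}$-hardness proof for a modal logic must overcome, and Ladner's argument (which you invoke) overcomes it not by explicit case expansion but by a \emph{modal simulation of quantification} in which the formula for level $i+1$ occurs only once inside the formula for level $i$. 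The paper does the same here: for $Q_k=\exists$ it sets $\psi_k=\diamn{2k}\diamn{4n-2k-1}\diamn{2k}\psi_{k+1}$, and for $Q_k=\forall$ the dual negated form, so the modal prefix itself ranges over the two possible truth values of the witness $\theta_k=\diamn{2k+1}\diamn{4n-2k-2}\top$ while leaving $\theta_0,\ldots,\theta_{k-1}$ fixed. Lemmas~\ref{ext_lemma1}--\ref{ext_lemma3} are exactly what is needed to commute the lower-indexed witnesses out of the modality and to show that both $\diamn{2k}\diamn{4n-2k-1}\diamn{2k}(\eta_{k+1}\land\theta_k)$ and $\diamn{2k}\diamn{4n-2k-1}\diamn{2k}(\eta_{k+1}\land\lnot\theta_k)$ reduce to $\eta_k$; this is the ingredient your sketch is missing.

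A second, related gap: even setting aside the size issue, the target equivalence ``$\Phi$ is true iff $\GLP\vdash\Theta(\Phi)$'' does not follow from a Boolean-combination translation, because provability does not commute with disjunction and your atomic witnesses are neither provable nor refutable (no word is decided by $\GLP$), so already the quantifier-free base case fails to track truth values through provability. The paper instead proves (Lemma~\ref{main_pspace_cmpl_lemma}) that $\psi_0$ is provably equivalent to $\eta_0$ when $\Phi$ is true and to $\bot$ when it is false, and poses the question as $\GLP\vdash\eta_0\longleftrightarrow\psi_0$, using the arithmetical semantics only to separate $\eta_0$ from $\bot$. You would need some analogous normalization of your encoded formula to a canonical ``true'' or ``false'' representative before the correspondence with QBF truth can be made precise.
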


The language $\GLP ^\omega_0$ lies in $\textsc{PSPACE}$ by Shapirovsky's theorem. Thus, in order to prove Theorem \ref{completeness_theorem}, we need to show that the language $\GLP^\omega_0$ is $\textsc{PSPACE}$-hard. 

By $\mathbf{QBF}$ we denote the language of all true closed quantified Boolean formulas. In order to prove $\textsc{PSPACE}$-hardness of the language $\GLP ^\omega_0$, we construct a polynomial-time reduction from the language $\GLP ^\omega_0$ to the language $\mathbf{QBF}$. 

Suppose we have a formula
$$Q_0x_0Q_1x_1\ldots Q_{n-1} x_{n-1} \varphi(x_0,\ldots,x_{n-1}),$$
where $Q_i\in\{\forall,\exists\}$ and $\varphi(x_0,\ldots,x_{n-1})$ is a Boolean formula with free propositional parameters $x_0,\ldots,x_{n-1}$.  We will construct closed polymodal formulas $\eta_0$ and $\psi_0$ such that $$Q_0x_0Q_1x_1\ldots Q_{n-1} x_{n-1} \varphi(x_0,\ldots,x_{n-1})\mbox{ is true iff }\GLP \vdash \eta_0\;\longleftrightarrow\; \psi_0;$$ 
there are no connective $\longleftrightarrow$ in the language and we express it with the use of $\land$ and $\to$ connectives.

 We construct following formulas:
\begin{itemize}
\item $\eta_n\rightleftharpoons \top$;
\item $\eta_i\rightleftharpoons \diamn{2i}\diamn{4n-2i-1}\top$, for $0\le i<n$;
\item $\theta_i\rightleftharpoons \diamn{2i+1}\diamn{4n-2i-2}\top$, for $0\le i< n$;
\item $\psi_n\rightleftharpoons\varphi[\theta_0,\ldots,\theta_{n-1}/x_0,\ldots,x_{n-1}]$;
\item $\psi_i\rightleftharpoons \diamn{2i}\diamn{4k-2i-1}\diamn{2i}\psi_{i+1}$, for $0\le i<n$ and $Q_i=\exists$.
\item $\psi_i\rightleftharpoons \eta_{i-1}\land \lnot \diamn{2i}\diamn{4k-2i-1}\diamn{2i}(\eta_i\land\lnot \psi_{i+1})$, where $0\le i<n$ and $Q_i=\forall$.
\end{itemize}

For a formula $\xi$ we put $\xi^{\top}\rightleftharpoons\xi$ and $\xi^{\bot}\rightleftharpoons\lnot \xi$.

For every $k\le n$ let $\Lambda_k$($\Lambda_k^{-}$) be the set of all $\sigma\colon \{0,\ldots,k-1\}\to \{\bot,\top\}$ such that 
$$Q_kx_k\ldots Q_{n-1} x_{n-1} \varphi(\sigma(0),\ldots,\sigma(k-1),x_k,\ldots,x_{n-1}) \mbox{ is true(false).}$$ 

The following three lemmas from \cite{Bek05-2} are given here without proof:
\begin{lemma}\cite[Lemma 1]{Bek05-2} \label{ext_lemma1} Suppose $\xi_1$, $\xi_2$ are polymodal formulas and $s_1,s_2$ are natural numbers such that $s_1<s_2$. Then
\begin{enumerate}
\item $\GLP\vdash \diamn{s_2}(\xi_1\land \diamn{s_1}\xi_2) \;\longleftrightarrow\; \diamn{s_2} \xi_1 \land \diamn{s_1}\xi_2$;
\item $\GLP\vdash \diamn{s_2}(\xi_1\land \lnot \diamn{s_1}\xi_2) \;\longleftrightarrow\; \diamn{s_2} \xi_1 \land \lnot\diamn{s_1}\xi_2$.
\end{enumerate}
\end{lemma}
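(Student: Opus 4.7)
The plan is to reduce both equivalences to two $\GLP$-provable auxiliary principles together with a standard normal-modal combination rule. The first principle,
\[\GLP\vdash \diamn{s_1}\xi_2 \to \lnot\diamn{s_2}\lnot\diamn{s_1}\xi_2,\]
is exactly axiom 5 instantiated with $k=s_1$, $n=s_2$. The second,
\[\GLP\vdash \diamn{s_2}\diamn{s_1}\xi_2 \to \diamn{s_1}\xi_2,\]
I would obtain by first applying axiom 4 to get $\diamn{s_2}\diamn{s_1}\xi_2\to\diamn{s_1}\diamn{s_1}\xi_2$ and then invoking transitivity of $\diamn{s_1}$, i.e., $\diamn{s_1}\diamn{s_1}\xi_2\to\diamn{s_1}\xi_2$, which is a well-known theorem of the single-modality fragment of $\GLP$ (the G\"odel--L\"ob logic $\mathbf{GL}$), derivable from the L\"ob axiom~3.

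The combination rule I need is
\[\GLP\vdash \diamn{s_2}\chi_1\land\lnot\diamn{s_2}\lnot\chi_2\to\diamn{s_2}(\chi_1\land\chi_2),\]
which is derived from axioms 1 and 7 by writing $\chi_1$ as $(\chi_1\land\chi_2)\lor(\chi_1\land\lnot\chi_2)$, distributing $\diamn{s_2}$ over the disjunction via axiom 1, weakening the second disjunct's argument from $\chi_1\land\lnot\chi_2$ to $\lnot\chi_2$ via monotonicity, and contraposing.

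Both equivalences of the lemma then fall out routinely. For part 1, the forward direction extracts $\diamn{s_2}\xi_1$ by monotonicity and extracts $\diamn{s_1}\xi_2$ by monotonicity followed by the second principle. The backward direction applies the first principle to $\diamn{s_1}\xi_2$ to obtain $\lnot\diamn{s_2}\lnot\diamn{s_1}\xi_2$ and then invokes the combination rule with $\chi_1=\xi_1$ and $\chi_2=\diamn{s_1}\xi_2$. Part 2 is entirely symmetric: the forward direction uses monotonicity together with the contrapositive of the first principle, $\diamn{s_2}\lnot\diamn{s_1}\xi_2\to\lnot\diamn{s_1}\xi_2$, to extract $\lnot\diamn{s_1}\xi_2$; the backward direction uses the contrapositive of the second principle, $\lnot\diamn{s_1}\xi_2\to\lnot\diamn{s_2}\diamn{s_1}\xi_2$, to supply the premise $\lnot\diamn{s_2}\lnot\chi_2$ required by the combination rule with $\chi_2=\lnot\diamn{s_1}\xi_2$ and $\chi_1=\xi_1$.

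There is no deep obstacle here; the only delicate point is to keep every step inside $\GLP$ as axiomatised with diamonds rather than boxes. In particular, one must double-check that transitivity of $\diamn{s_1}$ and the combination rule really are consequences of axioms 1, 3, and 7 together with propositional calculus and modus ponens, since both facts are most familiar in box-based presentations of $\mathbf{GL}$.
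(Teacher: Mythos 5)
Your proof is correct, and since the paper imports this lemma from \cite[Lemma 1]{Bek05-2} without proof, there is nothing to diverge from: your argument (axiom 5 for $\diamn{s_1}\xi_2\to\lnot\diamn{s_2}\lnot\diamn{s_1}\xi_2$, axiom 4 plus GL-transitivity for $\diamn{s_2}\diamn{s_1}\xi_2\to\diamn{s_1}\xi_2$, and the normality-based combination rule) is exactly the standard derivation in Beklemishev's work. The two facts you flag for double-checking do go through in the diamond-based axiomatization: axiom 3 is the contrapositive-dual of L\"ob's axiom, from which $\diamn{s_1}\diamn{s_1}A\to\diamn{s_1}A$ follows in the usual way, and axiom 1 together with rule 7 yields the normality needed for the combination rule.
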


\begin{lemma} \cite[Lemma 10]{Bek05-2}\label{ext_lemma2}Suppose $s$ is a number and $\alpha,\beta$ are words without $\diamn{0},\diamn{1},\ldots,\diamn{s-1}$ such that $\GLP\not\vdash \alpha\to\beta$. Then $\GLP\vdash \diamn{s}(\alpha\land \lnot \beta)\;\longleftrightarrow\;\diamn{s}\alpha$.
\end{lemma}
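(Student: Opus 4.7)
The plan is to split the biconditional in two and handle the easy direction by monotonicity, then reduce the hard direction to an auxiliary implication that I will establish via Beklemishev's normal-form classification of words. The $\Leftarrow$ direction, $\diamn{s}(\alpha\land\lnot\beta) \to \diamn{s}\alpha$, follows at once by rule~(7) from the propositional tautology $\alpha\land\lnot\beta \to \alpha$, so no work is needed there.

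For the $\Rightarrow$ direction I first apply axiom~(3) with $A:=\alpha$ to obtain $\GLP\vdash \diamn{s}\alpha \to \diamn{s}(\alpha\land\lnot\diamn{s}\alpha)$. It therefore suffices to establish the auxiliary claim
$$(\star)\qquad \GLP \vdash \beta \to \diamn{s}\alpha,$$
because the contrapositive of $(\star)$ gives $\lnot\diamn{s}\alpha \to \lnot\beta$, whence propositionally $\alpha\land\lnot\diamn{s}\alpha \to \alpha\land\lnot\beta$, and another application of rule~(7) followed by transitivity closes the chain $\diamn{s}\alpha \to \diamn{s}(\alpha\land\lnot\beta)$.

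The main obstacle is $(\star)$: deducing $\GLP\vdash \beta \to \diamn{s}\alpha$ from the hypothesis $\GLP\not\vdash \alpha\to\beta$ under the restriction that $\alpha,\beta$ contain only modalities $\diamn{k}$ with $k\geq s$. My intended route uses the ordinal assignment $o(\cdot)$ on words developed in Beklemishev's work on provability algebras: each such word is $\GLP$-equivalent to a canonical form indexed by an ordinal below $\varepsilon_0$, and provable implication between words corresponds to the reverse ordinal order, making implication a linear preorder on this class. The hypothesis then forces the strict inequality $o(\alpha)<o(\beta)$, and a further feature of the assignment is that in the restricted range $k\geq s$ the strict inequality $o(\beta)>o(\alpha)$ provably implies $\beta \to \diamn{s}\alpha$, which is exactly $(\star)$. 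If one prefers to avoid the full ordinal machinery, the same conclusion can be obtained by induction on the length of $\beta$, splitting on its outermost modality $\diamn{m}\beta'$ (with $m\geq s$) and using axiom~(4) to collapse $\diamn{m}$ into $\diamn{s}$, axiom~(5) together with Lemma~\ref{ext_lemma1} to handle diamonds of different levels, and axiom~(3) for the L\"ob steps, essentially recasting the normal-form computation as a recursion on words.
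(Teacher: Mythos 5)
The paper gives no proof of this lemma --- it is imported verbatim from \cite[Lemma 10]{Bek05-2} --- so your attempt can only be measured against the standard argument in that source, and your skeleton is in fact exactly that argument. The $\Leftarrow$ direction by rule (7) is fine, and the reduction of the $\Rightarrow$ direction to the claim $(\star)$ $\GLP\vdash\beta\to\diamn{s}\alpha$ is correct and cleanly executed: axiom (3) gives $\diamn{s}\alpha\to\diamn{s}(\alpha\land\lnot\diamn{s}\alpha)$, the contrapositive of $(\star)$ gives $\alpha\land\lnot\diamn{s}\alpha\to\alpha\land\lnot\beta$ propositionally, and rule (7) plus transitivity close the chain.

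The problem is that essentially all of the mathematical content of the lemma now lives in $(\star)$, and you do not prove it. The statement ``$\GLP\not\vdash\alpha\to\beta$ implies $\GLP\vdash\beta\to\diamn{s}\alpha$'' for words in the signature $\{\diamn{k}\mid k\ge s\}$ is precisely Beklemishev's comparability (linearity) theorem for the ordering $<_s$ on words, which is a substantial result with its own nontrivial proof; invoking ``the ordinal assignment $o(\cdot)$'' together with ``a further feature of the assignment'' is a citation, not an argument, and it is a citation to material of the same depth as the lemma you are supposed to be proving. Your fallback sketch --- induction on the length of $\beta$, collapsing the outermost $\diamn{m}$ to $\diamn{s}$ via axiom (4) --- is too vague to check and, as written, never explains where the hypothesis $\GLP\not\vdash\alpha\to\beta$ enters the induction; without that, the induction cannot possibly close, since $(\star)$ is false when $\GLP\vdash\alpha\to\beta$ (e.g.\ $\alpha=\beta$, by Fact \ref{GLP_not_vdash_fact}). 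So: correct and standard reduction, but the core claim is deferred rather than established; if citing the comparability theorem is permitted, the proof is complete, and otherwise there is a genuine gap exactly at $(\star)$.
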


Next lemma can be proved by straightforward induction on length of $\alpha$ with the use of \cite[Lemma 2]{Bek05-2}
\begin{lemma} \label{ext_lemma3}Suppose $s$ is a number and $\alpha$ is a word without $\diamn{s},\diamn{s+1},\ldots$. Then $$\GLP \vdash \diamn{s}\alpha\;\longleftrightarrow\;\diamn{s}\top.$$
\end{lemma}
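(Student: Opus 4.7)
I would proceed by induction on the length of the word $\alpha$. The base case $\alpha = \top$ is the tautology $\diamn{s}\top \leftrightarrow \diamn{s}\top$, so the interesting case is $\alpha = \diamn{t}\beta$ with $t < s$ and $\beta$ a strictly shorter word whose modalities are still all $< s$. The direction $\diamn{s}\alpha \to \diamn{s}\top$ is immediate from the derived rule of monotonicity for $\diamn{s}$ applied to $\alpha \to \top$.

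For the converse $\diamn{s}\top \to \diamn{s}\alpha$, I would first apply the inductive hypothesis to $\beta$ to obtain $\diamn{s}\top \to \diamn{s}\beta$. Axiom 4, using $t<s$, strips a diamond and gives $\diamn{s}\beta \to \diamn{t}\beta = \alpha$; hence $\diamn{s}\top \to \alpha$. Axiom 5, again because $t<s$, supplies the complementary monotonicity $\alpha = \diamn{t}\beta \to \boxn{s}\diamn{t}\beta = \boxn{s}\alpha$. Combining, $\diamn{s}\top \to \alpha \land \boxn{s}\alpha$, and in particular $\diamn{s}\top \to \boxn{s}\alpha$.

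The closing step is the standard normal-modal-logic fact $\diamn{s}\top \land \boxn{s}\alpha \to \diamn{s}\alpha$: rewriting $\diamn{s}\top$ as $\diamn{s}(\alpha \lor \lnot\alpha)$ and applying axiom 1 yields $\diamn{s}\top \to \diamn{s}\alpha \lor \diamn{s}\lnot\alpha$, and $\boxn{s}\alpha \equiv \lnot\diamn{s}\lnot\alpha$ forces the first disjunct. This appears to be the role of the referenced Lemma 2 of Bek05-2. The only step in the argument that is not mechanical is noticing that one cannot simply apply monotonicity to the inductive fact $\diamn{s}\top \to \alpha$ to get $\diamn{s}\top \to \diamn{s}\alpha$; what rescues the induction is that every word in $\diamn{<s}$ satisfies the axiom-5 monotonicity $\alpha \to \boxn{s}\alpha$, which together with axiom 1 promotes $\diamn{s}\top \to \alpha$ to $\diamn{s}\top \to \diamn{s}\alpha$.
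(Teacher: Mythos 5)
Your proof is correct and follows essentially the route the paper intends: a straightforward induction on the length of $\alpha$, where the inductive step promotes $\diamn{s}\top\to\diamn{s}\beta$ to $\diamn{s}\top\to\diamn{s}\diamn{t}\beta$ via axioms 4 and 5 together with the normality of $\diamn{s}$ (your derivation of $\diamn{s}\top\land\lnot\diamn{s}\lnot\alpha\to\diamn{s}\alpha$ is exactly the content of the cited Lemma 2 of \cite{Bek05-2}, which the paper invokes instead of unfolding).
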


\begin{remark} The disjunction  of the empty set of formulas $\bigvee\limits_{\xi\in\emptyset}\xi$ is $\bot$.
The conjunction of the empty set of formulas $\bigwedge\limits_{\xi\in\emptyset}\xi$ is $\top$.
\end{remark} 

\begin{lemma} \label{main_pspace_cmpl_lemma} Suppose $k\le n$. Then $$\GLP \vdash \bigvee\limits_{\sigma\in\Lambda_k}(\eta_{k}\land \bigwedge\limits_{i<k}\theta_i^{\sigma(i)})\;\longleftrightarrow\; \psi_k.$$
\end{lemma}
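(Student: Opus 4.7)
The plan is to prove the lemma by downward induction on $k$, running from $k=n$ down to $k=0$.

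The base case $k=n$ is purely propositional: $\eta_n=\top$ and $\psi_n=\varphi[\theta_0,\ldots,\theta_{n-1}/x_0,\ldots,x_{n-1}]$, while $\Lambda_n$ is exactly the set of satisfying assignments of $\varphi$. Thus the required equivalence $\bigvee_{\sigma\in\Lambda_n}\bigwedge_{i<n}\theta_i^{\sigma(i)}\leftrightarrow\varphi[\theta/x]$ is the standard disjunctive-normal-form expansion of $\varphi$ in which $\theta_0,\ldots,\theta_{n-1}$ are treated as fresh propositional atoms, and is therefore provable in $\mathbf{PC}$ alone.

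For the inductive step, assume the claim at level $k+1$ and split on $Q_k$. In the existential case $\psi_k=\diamn{2k}\diamn{4n-2k-1}\diamn{2k}\psi_{k+1}$; substituting the induction hypothesis and distributing the three modalities over the resulting disjunction (axiom 1 and rule 7) produces, for each $\sigma\in\Lambda_{k+1}$, an expression of the shape $\diamn{2k}\diamn{4n-2k-1}\diamn{2k}(\eta_{k+1}\land\theta_k^{\sigma(k)}\land\bigwedge_{i<k}\theta_i^{\sigma(i)})$. For $i<k$ the outer modality of $\theta_i^{\sigma(i)}$ is $\diamn{2i+1}$ with $2i+1<2k$, so three successive applications of Lemma \ref{ext_lemma1} (using parts 1 or 2 according to $\sigma(i)$) pull these conjuncts all the way outside the prefix $\diamn{2k}\diamn{4n-2k-1}\diamn{2k}$. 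What remains is a core $\diamn{2k}\diamn{4n-2k-1}\diamn{2k}(\eta_{k+1}\land\theta_k^{\sigma(k)})$ conjoined with $\bigwedge_{i<k}\theta_i^{\sigma(i)}$; I would then show the core is $\GLP$-equivalent to $\eta_k=\diamn{2k}\diamn{4n-2k-1}\top$ by combining Lemma \ref{ext_lemma3} to normalize the inner words with, for $\sigma(k)=\bot$, Lemma \ref{ext_lemma2}, whose non-provability hypothesis $\GLP\not\vdash\eta_{k+1}\to\theta_k$ is secured by Fact \ref{GLP_not_vdash_fact}. Collecting duplicates under the projection $\sigma\mapsto\sigma|_{<k}$ and invoking the semantics of $\exists$, namely $\Lambda_k=\{\sigma|_{<k}:\sigma\in\Lambda_{k+1}\}$, yields the target disjunction $\bigvee_{\sigma\in\Lambda_k}(\eta_k\land\bigwedge_{i<k}\theta_i^{\sigma(i)})$.

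The universal case is dual: from $\psi_k=\eta_{k-1}\land\lnot\diamn{2k}\diamn{4n-2k-1}\diamn{2k}(\eta_k\land\lnot\psi_{k+1})$ one substitutes the induction hypothesis, pushes the inner $\lnot$ through by DeMorgan, and performs the analogous pull-out of the $\theta_i^{\sigma(i)}$ for $i<k$; the propositional tautology $\bigvee_{\sigma\in 2^{\{0,\ldots,k-1\}}}\bigwedge_{i<k}\theta_i^{\sigma(i)}\leftrightarrow\top$ together with the $\forall$-characterization $\Lambda_k=2^{\{0,\ldots,k-1\}}\setminus\Lambda_k^-$ then converts the negated-complement form into the positive disjunction over $\Lambda_k$. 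The principal obstacle is the core modal equivalence $\GLP\vdash\diamn{2k}\diamn{4n-2k-1}\diamn{2k}(\eta_{k+1}\land\theta_k^{\sigma(k)})\leftrightarrow\eta_k$ (and its $\forall$ analogue): this is precisely where the carefully engineered indices $2i$, $2i+1$, $4n-2i-1$, $4n-2i-2$ come into play, allowing Lemma \ref{ext_lemma2} to be invoked with the right words and Fact \ref{GLP_not_vdash_fact} to certify the non-provability hypothesis, while the bookkeeping of modality levels needed to apply Lemma \ref{ext_lemma1} at every pull-out demands the most care.
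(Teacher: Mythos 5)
Your proposal is correct and follows essentially the same route as the paper's own proof: downward induction on $k$, distributing the modal prefix over the disjunction supplied by the induction hypothesis, extracting the conjuncts $\theta_i^{\sigma(i)}$ ($i<k$) with Lemma \ref{ext_lemma1}, collapsing the remaining core to $\eta_k$ via Lemmas \ref{ext_lemma2} and \ref{ext_lemma3} with Fact \ref{GLP_not_vdash_fact} supplying the non-provability hypothesis, and treating the universal case dually through $\Lambda^-_k$. The only place you compress more than the paper is the deduction of $\GLP\not\vdash\eta_{k+1}\to\theta_k$ from Fact \ref{GLP_not_vdash_fact}, which in the paper requires a short explicit chain of implications using axioms 4 and 5.
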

\begin{proof}
We prove the Lemma  by induction on $n-k$. It is clear that the induction hypothesis holds for $k=n$. Now we prove the inductive step. Consider the case $Q_k=\exists$.
We present the sequence of formulas from $L^\omega_0$ and then prove that neighboring formulas from this sequence are $\GLP$-provably equivalent:
\begin{enumerate}
\item $\psi_k$;
\item $\diamn{2k}\diamn{4n-2k-1}\diamn{2k}\psi_{k+1}$;
\item $\diamn{2k}\diamn{4n-2k-1}\diamn{2k}(\bigvee\limits_{\sigma\in\Lambda_{k+1}}(\eta_{k+1}\land \bigwedge\limits_{i<k+1}\theta_i^{\sigma(i)}))$;
\item $\bigvee\limits_{\sigma\in\Lambda_{k+1}}\diamn{2k}\diamn{4n-2k-1}\diamn{2k}(\eta_{k+1}\land \bigwedge\limits_{i<k+1}\theta_i^{\sigma(i)}))$;
\item $\bigvee\limits_{\sigma\in\Lambda_{k+1}}(\diamn{2k}\diamn{4n-2k-1}\diamn{2k}(\eta_{k+1}\land \theta_k^{\sigma(k)}) \land \bigwedge\limits_{i<k}\theta_i^{\sigma(i)})$;
\item $\bigvee\limits_{\sigma\in\Lambda_{k+1}}(\diamn{2k}\diamn{4n-2k-1}\top \land \bigwedge\limits_{i<k}\theta_i^{\sigma(i)})$ ;
\item  $\bigvee\limits_{\sigma\in\Lambda_{k}}(\eta_k \land \bigwedge\limits_{i<k}\theta_i^{\sigma(i)})$.
\end{enumerate}
 Clearly, that pairs of formulas $\langle 1.,2.\rangle$, $\langle 2.,3. \rangle$, $\langle 3.,4. \rangle$, and $\langle 6.,7.\rangle$ are pairs of $\GLP$-provable equivalent formulas. The equivalence between 4. and 5. can be obtained by iterative application of Lemma \ref{ext_lemma1}.  In order to prove the $\GLP$-equivalence between 5. and 6. we prove that

\begin{equation}\label{3247_form1}\GLP\vdash\diamn{4n-2k-1}\diamn{2k}(\eta_{k+1}\land \theta_k)\;\longleftrightarrow\; \diamn{4n-2k-1}\top,\end{equation}
\begin{equation}\label{3247_form2}\GLP\vdash\diamn{4n-2k-1}\diamn{2k}(\eta_{k+1}\land \lnot \theta_k)\;\longleftrightarrow\; \diamn{4n-2k-1}\top.\end{equation}

We have $$\GLP \vdash \eta_{k+1}\land \theta_k\;\longleftrightarrow\; \diamn{2k+2}\diamn{4n-2k-3}\diamn{2k+1}\diamn{4n-2k-2}\top$$ by Lemma \ref{ext_lemma1}. From Lemma \ref{ext_lemma3} it follows that $$\GLP\vdash \diamn{4n-2k-1}\diamn{2k}\diamn{2k+2}\diamn{4n-2k-3}\diamn{2k+1}\diamn{4n-2k-2}\top\;\longleftrightarrow\;\diamn{4n-2k-1}\top$$. Thus equivalence (\ref{3247_form1}) holds.

 Let us prove that $\GLP\not\vdash\eta_{k+1}\to \theta_{k}$. Assume converse, $\GLP\vdash\eta_{k+1}\to \theta_{k}.$ Then we have
$$
\begin{aligned}
\GLP\vdash \diamn{0}\diamn{2k+2}\diamn{4n-2k-3}\top& \to \diamn{0}\diamn{2k+1}\diamn{4n-2k-2}\top\\
& \to \diamn{0}\diamn{0}\diamn{4n-2k-2}\top\\
& \to \diamn{0}\diamn{0}\diamn{4n-2k-2}\diamn{4n-2k-3}\top\\
&\to \diamn{0}\diamn{0}\diamn{2k+2}\diamn{4n-2k-3}\top.
\end{aligned}
$$
But by Fact \ref{GLP_not_vdash_fact} we have $$\GLP\not \vdash \diamn{0}\diamn{2k+2}\diamn{4n-2k-3}\top \to \diamn{0}\diamn{0}\diamn{2k+2}\diamn{4n-2k-3}\top.$$ Contradiction. 
Therefore $\GLP\not\vdash\eta_{k+1}\to \theta_{k}$. 

By Lemma \ref{ext_lemma2} we have $$\GLP\vdash\diamn{2k}(\eta_{k+1}\land \lnot \theta_k)\;\longleftrightarrow \diamn{2k}\diamn{2k+2}\diamn{2n-2k-3}\top.$$ Hence by Lemma \ref{ext_lemma3} the equivalence (\ref{3247_form2}) holds. 

Therefore formulas 5. and 6. are $\GLP$-provably equivalent. Finally, we conclude that formulas 1. and 7. are $\GLP$-provable equivalent. This finish the proof of the inductive step in the case $Q_k=\exists$.

 Now we switch to the case $Q_k=\forall$. We consider the following sequence of formulas from $L^\omega_0$:
\begin{enumerate} 
\item $\psi_k$;
\item $\eta_k\land \lnot \diamn{2k}\diamn{4n-2k-1}\diamn{2k}(\eta_{k+1}\land \lnot \psi_{k+1})$;
\item $\eta_k\land \lnot \diamn{2k}\diamn{4n-2k-1}\diamn{2k}(\eta_{k+1}\land \lnot( \bigvee\limits_{\sigma\in\Lambda_{k+1}}(\eta_{k+1}\land \bigwedge\limits_{i<k+1}\theta_i^{\sigma(i)})))$;
\item $\eta_k\land \lnot \diamn{2k}\diamn{4n-2k-1}\diamn{2k}(\bigvee\limits_{\sigma\in\Lambda^{-}_{k+1}}(\eta_{k+1}\land \bigwedge\limits_{i<k+1}\theta_i^{\sigma(i)}))$;
\item $\eta_k\land \lnot (\bigvee\limits_{\sigma\in\Lambda^{-}_{k}}(\eta_{k}\land \bigwedge\limits_{i<k}\theta_i^{\sigma(i)}))$;
\item  $\bigvee\limits_{\sigma\in\Lambda_{k}}(\eta_k \land \bigwedge\limits_{i<k}\theta_i^{\sigma(i)})$.
\end{enumerate}
All equivalences between neighboring formulas in last sequence but the equivalence between 4. and 5. holds obviously. The last equivalence can be proved in the same way as equivalency between formulas 3. and 7. from the proof of the inductive step for the case $Q_k=\exists$.
\end{proof} 

By Lemma \ref{main_pspace_cmpl_lemma} we have $\GLP\vdash \eta_0\;\longleftrightarrow \psi_0$ if the formula $$Q_0x_0Q_1x_1\ldots Q_{n-1} x_{n-1} \varphi(x_0,x_1,\ldots,x_{n-1})$$ is true and $\GLP\vdash \bot\;\longleftrightarrow \psi_0$ if that formula is false. Using the arithmetic semantics for $\mathbf{GLP}$ we easily obtain $\GLP\not\vdash \eta_0\;\longleftrightarrow \bot$. Hence $$Q_0x_0Q_1x_1\ldots Q_{n-1} x_{n-1} \varphi(x_0,\ldots,x_{n-1})\mbox{ is true iff }\GLP \vdash \eta_0\;\longleftrightarrow\; \psi_0.$$

It is easy to check that the formula $\eta_0\;\longleftrightarrow\;\psi_0$ is constructed in polynomial time in length of $Q_0x_0Q_1x_1\ldots Q_{n-1} x_{n-1} \varphi(x_0,x_1,\ldots,x_{n-1})$. This gives us the reduction and finish the proof of Theorem \ref{completeness_theorem}.

\section{Fragments $\GLP ^n_0$}
The method we describe in the previous section essentially use an infinite number of modalities. Therefore for every finite $n$ this method cannot be used to prove $\textsc{PSPACE}$-hardness of $\GLP ^n_0$.

We prove in this section
 \begin{theorem} \label{n_fragment} For every number $n$, the language $\GLP ^n_0$ lies in $\textsc{PTIME}$.
\end{theorem}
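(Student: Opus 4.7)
I would proceed by induction on $n$. The base case $n=0$ is immediate, since $L^0_0$ consists of Boolean combinations of the constants $\top$ and $\bot$, which can be evaluated in linear time. For the inductive step, I assume a polynomial-time decision procedure $A_n$ for $\GLP^n_0$ and build one, $A_{n+1}$, for $\GLP^{n+1}_0$.

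The central idea is a normal form that separates the top modality $\diamn{n}$ from the lower ones $\diamn{0},\ldots,\diamn{n-1}$. Using Lemma \ref{ext_lemma1} with $s_2=n$, any subformula of the shape $\diamn{k}\eta$ with $k<n$ that appears inside a $\diamn{n}$-context can be pulled past the surrounding $\diamn{n}$; iterating this rewriting, every $\varphi\in L^{n+1}_0$ becomes provably equivalent to a Boolean combination of (i) formulas in $L^n_0$, decidable by $A_n$, and (ii) formulas of the form $\diamn{n}\chi$ where $\chi$ itself has this separated shape. Within each such $\diamn{n}\chi$, the provability question reduces, via L\"ob-type reasoning for $\diamn{n}$ (captured by axioms 2 and 3), to a collection of strictly simpler consistency questions for the components of $\chi$, which are handled recursively by $A_{n+1}$ on smaller $\diamn{n}$-depth and by $A_n$ on the $L^n_0$-residues. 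Lemma \ref{ext_lemma3} is helpful here, since it collapses certain $\diamn{n}$-applied-to-lower-modality subformulas to $\diamn{n}\top$ and thus bounds the effective $\diamn{n}$-structure.

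The main obstacle is avoiding exponential blow-up when producing the normal form. A direct application of Lemma \ref{ext_lemma1} can duplicate subformulas, and iterating it naively leads to exponentially many subterms. I would represent all intermediate formulas as DAGs with shared subformulas and use $A_n$ as a polynomial-time equivalence oracle on the $L^n_0$-fragments, so that provably equivalent subformulas are identified and memoized. Combined with the fact that the $\diamn{n}$-depth strictly decreases in each recursive call, this limits the number of distinct subproblems to a polynomial in $|\varphi|$, and the overall running time stays polynomial as required.
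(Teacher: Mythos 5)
Your strategy is genuinely different from the paper's (which works semantically, evaluating each subformula as a set of worlds of the universal model $\mathcal{U}^n_{\omega_n}$ represented by recursively defined interval codes whose size grows only additively per connective), and as it stands it has concrete gaps. The first is the normal form itself. Lemma \ref{ext_lemma1} extracts a \emph{lower}-indexed modal literal from inside a \emph{higher}-indexed diamond; it says nothing about a $\diamn{n}$-subformula occurring inside a $\diamn{k}$-context with $k<n$. A formula such as $\diamn{0}\diamn{n}\top\in L^{n+1}_0$ is therefore untouched by your rewriting: it is not in $L^n_0$, it is not of the form $\diamn{n}\chi$, and none of the cited lemmas lets you move the inner $\diamn{n}$ outward (Lemma \ref{ext_lemma3} goes the wrong way, collapsing low-indexed words \emph{under} a high-indexed diamond). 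So the claimed decomposition of every $\varphi\in L^{n+1}_0$ into a Boolean combination of $L^n_0$-formulas and $\diamn{n}$-rooted formulas is not established, and the words $\diamn{n_0}\cdots\diamn{n_{k-1}}\top$ with arbitrarily interleaved indices show that the closed fragment genuinely requires such mixed nestings.

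Second, even where Lemma \ref{ext_lemma1} does apply, it only extracts literals that occur as top-level conjuncts of the body of a diamond, so before each application you must bring that body into a disjunctive form over modal literals and distribute $\diamn{n}$ over the disjunction. This multiplication can produce exponentially many \emph{distinct, pairwise non-equivalent} conjunctions of modal literals; DAG-sharing does not reduce their number, and memoization modulo provable equivalence does not help when the subproblems really are inequivalent. The observation that the $\diamn{n}$-depth decreases bounds the recursion depth, not the Boolean blow-up occurring at a single depth, so the polynomial bound is asserted rather than proved. Finally, the step ``L\"ob-type reasoning reduces $\diamn{n}\chi$ to simpler consistency questions'' is where all the interaction between $\diamn{n}$ and the lower modalities (axioms 4 and 5, e.g.\ $\diamn{k}A\to\lnot\diamn{n}\lnot\diamn{k}A$) must be handled, and the proposal never specifies this reduction. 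These are exactly the difficulties the paper sidesteps by working with explicit set codes in the Ignatiev model, where complementation, intersection and $R_k$-preimage each increase the code's width and ordinal complexity by at most an additive constant, which is what actually yields the $\Olarge(|\varphi|^{n+3})$ bound.
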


First, we give a plan of our proof. We use a Kripke model $\mathcal{U}^n_{\omega_n}$ such that the fragment $\GLP^n_0$ is complete with respect to this model. For every formula from $L^n_0$ there is a corresponding set of $\mathcal{U}^n_{\omega_n}$-worlds (the set of all worlds that satisfy this formula). Completeness of $\GLP^n_0$ with respect to $\mathcal{U}^n_{\omega_n}$ means that a formula from $L^n_0$ lies in $\GLP^n_0$ iff every world of $\mathcal{U}^n_{\omega_n}$ lies in the corresponding set. Of course for a formula $\varphi\in L^n_0$ the corresponding set can be obtain by interpreting propositional constants in $\varphi$ and propositional connectives in $\varphi$ as special sets of $\mathcal{U}^n_{\omega_n}$-worlds and special operations on sets of $\mathcal{U}^n_{\omega_n}$-worlds, respectively.  
 We use special codes to encode sets of $\mathcal{U}^n_{\omega_n}$-worlds (note that there exist sets without a corresponding code). In a decision algorithm we use computable functions $\cmpscr{Intr}(x,y)$, $\cmpscr{Cmpl}(x)$, $\cmpscr{RInv}_0(x),\ldots,\cmpscr{RInv}_{n-1}(x)$, $\cmpscr{IsEmp}(x)$ to manipulate codes (in the complete proof below these functions have additional arguments and parameters). Our decision algorithm  works this way:
\begin{enumerate}
\item We get an input formula $\varphi\in L^n_0$.
\item We switch to an formula $\varphi'$ such that $$\GLP\vdash \varphi'\longleftrightarrow \varphi$$ and $\varphi'$ is build of $\bot,\land,\lnot,
\diamn{0},\ldots,\diamn{n-1}$. We construct $\varphi'$ by straightforward translation.
\item We build a code $c(\varphi')$ for the set that corresponds to $\varphi'$. In order to do that we define the mapping $c$ of $\varphi'$ subformulas to codes. Function $c$ is given by the following rules:
\begin{enumerate}
\item $c(\bot)$ is the constant code for empty set;
\item $c(\psi_1\land\psi_2)$ is $\cmpscr{Intr}(c(\psi_1),c(\psi_2))$;
\item $c(\lnot\psi)$ is $\cmpscr{Cmpl}(c(\psi))$;
\item $c(\diamn{k}\psi)$ is $\cmpscr{RInv}_k(c(\psi))$.
\end{enumerate}
\item We accept $\varphi$ iff $\cmpscr{IsEmp}$ returns positive answer on input $\cmpscr{Cmpl}(c(\varphi'))$.
\end{enumerate}
 Further we describe the way we estimate the algorithm running time. We introduce functions $\cmp^n_{\omega_n}$ and $\width^n_{\omega_n}$ to measure complexity of codes. We prove bounds on the complexity of resulting codes and running time for functions $\cmpscr{Cmpl},\cmpscr{Intr},\cmpscr{RInv}_{0},\cmpscr{RInv}_{1},\ldots,\cmpscr{RInv}_{n-1}$  in the terms of complexity of input codes. This gives us the estimation for running time of our decision algorithm. In most of the lemmas below we simultaneously construct a computable function with desired properties and prove bounds for this function.

Now we are going to give a precise definition of Kripke models we use. The definition of models $\mathcal{U}^n_\alpha$ uses the notion of ordinal number. In this section we denote ordinal numbers by lower case Greek letters $\alpha,\beta,\gamma,\delta,\zeta$; we denote by $\mathbf{On}$ the class of all ordinals.

\begin{fact}[Cantor Normal Form Theorem]\label{CNF_theorem} Every ordinal $\alpha$ can be presented in a unique way as a sum
$$\alpha=\omega^{\beta_0}+\ldots+\omega^{\beta_{n-1}}$$
such that $\beta_0\ge\beta_1\ge\ldots\ge\beta_{n-1}$ and $n\ge 0$.
\end{fact}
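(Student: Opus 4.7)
The plan is to prove both existence and uniqueness by transfinite induction on $\alpha$, relying only on the standard ordinal arithmetic facts that $\beta\mapsto\omega^\beta$ is a strictly increasing and continuous normal function and that ordinal subtraction is well-defined, i.e.\ for any $\beta\le\alpha$ there is a unique $\gamma$ with $\alpha=\beta+\gamma$.

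For existence, the base case $\alpha=0$ is trivial with $n=0$. For $\alpha>0$, I would first observe that the class $\{\beta:\omega^\beta\le\alpha\}$ is nonempty (it contains $0$), bounded above (since $\beta\mapsto\omega^\beta$ is cofinal in $\mathbf{On}$), and closed under suprema (by continuity of $\beta\mapsto\omega^\beta$), hence attains a maximum $\beta_0$. Ordinal subtraction then produces a unique $\gamma$ with $\alpha=\omega^{\beta_0}+\gamma$; maximality of $\beta_0$ gives $\alpha<\omega^{\beta_0+1}$, hence $\gamma<\omega^{\beta_0+1}$, and since $\omega^{\beta_0}>0$ we have $\gamma<\alpha$, so the induction hypothesis applies to $\gamma$ and yields $\gamma=\omega^{\beta_1}+\cdots+\omega^{\beta_{n-1}}$ with $\beta_1\ge\cdots\ge\beta_{n-1}$. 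Finally $\omega^{\beta_1}\le\gamma<\omega^{\beta_0+1}$ forces $\beta_1\le\beta_0$, completing the CNF for $\alpha$.

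For uniqueness, the key auxiliary fact is that any CNF-like sum with nonincreasing exponents satisfies the strict bound $\omega^{\gamma_0}+\cdots+\omega^{\gamma_{k-1}}<\omega^{\gamma_0+1}$. This is proved by induction on $k$ using $\omega^{\gamma_0+1}=\omega^{\gamma_0}\cdot\omega=\sup_{m<\omega}\omega^{\gamma_0}\cdot m$ together with monotonicity of ordinal addition: any finite partial sum is bounded by some $\omega^{\gamma_0}\cdot m$, and adding a further $\omega^{\gamma_i}\le\omega^{\gamma_0}$ still stays below $\omega^{\gamma_0}\cdot(m+1)<\omega^{\gamma_0+1}$. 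Consequently, for any two CNFs of $\alpha$ the leading exponent is pinned down as $\max\{\beta:\omega^\beta\le\alpha\}$, so the two leading terms agree; cancelling $\omega^{\beta_0}$ by uniqueness of subtraction and applying the induction hypothesis then forces equality of all remaining terms.

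The main obstacle is not the inductive scheme itself, which collapses to a short peel-the-leading-term argument once the groundwork is in place, but rather the supporting ordinal arithmetic: continuity of exponentiation, the unique-subtraction lemma, and the strict bound $\sum_i\omega^{\gamma_i}<\omega^{\gamma_0+1}$ for finite nonincreasing exponent sequences. These are classical but would need to be either verified carefully or invoked from a textbook on ordinal arithmetic before the main induction can be carried out.
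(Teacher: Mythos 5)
The paper states the Cantor Normal Form Theorem as a classical \emph{Fact} and gives no proof of it, so there is no in-paper argument to compare yours against; judged on its own, your proof is the standard textbook one (peel off the largest $\omega$-power, subtract, induct; bound nonincreasing sums by $\omega^{\gamma_0+1}$ for uniqueness), and its overall structure is sound. One step, however, is not justified as written: you infer $\gamma<\alpha$ from ``$\omega^{\beta_0}>0$'', but adding a positive ordinal on the \emph{left} need not strictly increase an ordinal (e.g.\ $1+\omega=\omega$), so $\alpha=\omega^{\beta_0}+\gamma$ with $\omega^{\beta_0}>0$ does not by itself rule out $\gamma=\alpha$. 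The repair is immediate from material you already have: if $\gamma=\alpha$ then $\alpha=\omega^{\beta_0}+\alpha$, and iterating gives $\alpha\ge\omega^{\beta_0}\cdot m$ for every $m<\omega$, hence $\alpha\ge\omega^{\beta_0}\cdot\omega=\omega^{\beta_0+1}$, contradicting the bound $\alpha<\omega^{\beta_0+1}$ that you derived from the maximality of $\beta_0$. With that one sentence added the existence half is complete, and the uniqueness half, resting on $\omega^{\gamma_0}+\cdots+\omega^{\gamma_{k-1}}\le\omega^{\gamma_0}\cdot k<\omega^{\gamma_0+1}$ for nonincreasing exponents, is correct as sketched.
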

 Let {\it the function $\ordlog\colon\mathbf{On}\to\mathbf{On}$} be given by 
\begin{itemize}
\item $\ordlog(0)=0$;
\item $\ordlog(\alpha)=\beta_{n-1}$, where $\alpha>0$ and Cantor normal form of $\alpha$ is $\omega^{\beta_0}+\ldots+\omega^{\beta_{n-1}}$.
\end{itemize}

We use following notations for ordinals:
\begin{itemize}
\item $\omega_0=1$;
\item $\omega_{n+1}=\omega^{\omega_n}$;
\item $\varepsilon_0=\sup\limits_{n\to \omega}\omega_n$.
\end{itemize}
Ordinal $\varepsilon_0$ is the first ordinal $\alpha$ such that $\omega^\alpha=\alpha$. 

We present a definition of {\it Ignatiev's Model} $\mathcal{U}=(U,R_0,R_1,\ldots)$ \cite{Ign93}. The set $U$ is the set of all sequences 
$$(\alpha_0,\alpha_1,\alpha_2,\ldots)$$
such that every $\alpha_i$ is an ordinal, $\alpha_0<\varepsilon_0$ and $\alpha_{i+1}\le \ordlog(\alpha_i)$, for every $i\in\omega$.  For every number $k$ the binary relation $R_k$ is given by
$$(\alpha_0,\alpha_1,\ldots)R_k(\beta_0,\beta_1,\ldots)\stackrel{\mathrm{def}}{\iff} \beta_k<\alpha_k \& \forall i<k( \alpha_i=\beta_i).$$
The model $\mathcal{U}$ is the universal model for the closed fragment of $\GLP $ \cite{Ign93}\cite{BekJooVer05}. For every formula $\varphi\in L^\omega_0$ we have
$$\GLP \vdash\varphi \iff  \varphi\mbox{ is valid in $\mathcal{U}$}.$$
It is easy to see that for every sequence $(\alpha_0,\alpha_1,\ldots)\in U$ we have $\alpha_i=0$, for enough big $i$.

Actually we will work with ``smaller'' {\it models $\mathcal{U}_\alpha^n =(U^n_\alpha,R_0,R_1,\ldots,R_{n-1})$} for $1\le\alpha<\varepsilon_0$ and $n\ge 0$. For $\alpha<\varepsilon_0$ the set $U^n_\alpha\subset U$ is the set of all sequences of ordinals
$$(\alpha_0,\alpha_1,\ldots,\alpha_{n-1})$$
such that $\alpha_0<\alpha$ and $\alpha_{i+1}\le \ordlog(\alpha_i)$ for every $i< n-1$. For every $k<n$ the binary relation $R_k$ is given by
$$(\alpha_0,\alpha_1,\ldots,\alpha_{n-1})R_k(\beta_0,\beta_1,\ldots,\beta_{n-1})\stackrel{\mathrm{def}}{\iff} \beta_k<\alpha_k \& \forall i<k( \alpha_i=\beta_i).$$ 
We note several properties of models $\mathcal{U}^n_\alpha$
\begin{fact} \label{U_n_alpha_fact} Suppose $n$ is a number and $\alpha$ is an ordinals, $0<\alpha<\varepsilon_0$.
\begin{enumerate}
\item the only element of $U^0_\alpha$ is $()$;
\item for every $\alpha_0$ the model $(\{(\beta_0,\beta_1,\ldots,\beta_{n-1})\in U^n_\alpha\mid \alpha_0=\beta_0\},R_1,\ldots,R_{n-1})$ is isomorphic to the model $\mathcal{U}^{n-1}_{\ordlog(\alpha_0)+1}$;
\item for every $k$ from $1$ to $n$ and $(\beta_0,\ldots,\beta_{n-1}),(\gamma_0,\ldots,\gamma_{n-1})\in U^n_\alpha$ such that $$(\beta_0,\ldots,\beta_{n-1})R_k(\gamma_0,\ldots,\gamma_{n-1}),$$ we have $\beta_0=\gamma_0$.
\end{enumerate}
\end{fact}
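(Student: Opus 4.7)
The plan is to verify each of the three items directly from the definitions of $U^n_\alpha$ and $R_k$; nothing deeper is required.

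Part 1 is immediate: sequences of length $0$ have only the empty representative $()$.

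For part 2 (with $\alpha_0<\alpha$ implicit, since otherwise the fibre is empty), I would write down the projection
\[
\Phi\colon(\alpha_0,\beta_1,\ldots,\beta_{n-1})\longmapsto(\beta_1,\ldots,\beta_{n-1})
\]
as the candidate isomorphism. The defining constraints of $U^n_\alpha$ restricted to worlds with first coordinate $\alpha_0$ read $\beta_1\le\ordlog(\alpha_0)$ together with $\beta_{i+1}\le\ordlog(\beta_i)$ for $1\le i\le n-2$; since for ordinals $\beta_1\le\ordlog(\alpha_0)$ is equivalent to $\beta_1<\ordlog(\alpha_0)+1$, these are exactly the membership constraints of $U^{n-1}_{\ordlog(\alpha_0)+1}$ after the relabelling $\delta_j:=\beta_{j+1}$. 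Hence $\Phi$ is a bijection. For the relations, I would compare the two definitions under the same relabelling: on the fibre, $(\alpha_0,\beta_1,\ldots)R_{k+1}(\alpha_0,\gamma_1,\ldots)$ means $\gamma_{k+1}<\beta_{k+1}$ together with $\beta_i=\gamma_i$ for $i<k+1$; the $i=0$ clause is automatic (both first coordinates equal $\alpha_0$), and the remaining clauses $\beta_i=\gamma_i$ for $1\le i\le k$ become $\delta_j=\delta'_j$ for $j<k$, matching the definition of $R_k$ on $\mathcal{U}^{n-1}_{\ordlog(\alpha_0)+1}$. Thus $R_{k+1}$ on the restricted big model corresponds to $R_k$ on the small one, so $\Phi$ is an isomorphism of the models listed.

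Part 3 is a tautology of the definition of $R_k$: it requires $\beta_i=\gamma_i$ for all $i<k$, and for $k\ge 1$ this includes $i=0$, giving $\beta_0=\gamma_0$.

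I do not anticipate any real obstacle. The only point requiring mild care is the index shift in part 2, where relation $R_{k+1}$ of the large model corresponds to relation $R_k$ of the smaller one; everything else unwinds mechanically from the definitions.
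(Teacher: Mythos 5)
Your proof is correct and is exactly the routine definitional verification that the paper omits (it states this as a Fact without proof); the two points needing care --- the tacit restriction to $\alpha_0<\alpha$ so the fibre is nonempty, and the shift whereby $R_{k+1}$ on the fibre corresponds to $R_k$ on $\mathcal{U}^{n-1}_{\ordlog(\alpha_0)+1}$ --- are both handled properly. Nothing further is needed.
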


The model $\mathcal{U}^n_{\omega_n}$ is the universal model for the closed $n$-modal fragment of $\GLP $ \cite{Ign93}\cite{BekJooVer05}. For every formula $\varphi\in L^n_0$ we have
$$\GLP \vdash\varphi \iff  \varphi\mbox{ is valid in $\mathcal{U}^n_{\omega_n}$}.$$

We will use the well-known $\textsc{RAM}$(random access machine) calculation model. More specifically, we will use the variant of $\textsc{RAM}$ from \cite{CookRec73} with the execution time for every instruction equal to $1$. All time bounds in present paper are given for this model. In \cite{CookRec73} it was shown that $\textsc{RAM}$ can be simulated on a multi-tape Turing machine with at most cubic running time growth.
 
We will effectively encode some subsets of $U^n_\alpha$. In order to do it we will use the following encoding of ordinals less then $\varepsilon_0$ known as Cantor ordinal notations. We encode expressions in Cantor normal forms
$$\omega^{\beta_0}+\ldots+\omega^{\beta_{n-1}},$$
where every $\beta_i$ is also encoded this way. Obviously, this gives us unique (by Fact \ref{CNF_theorem}) encoding for every ordinal less than $\varepsilon_0$.

All ordinals that we use below are less than $\varepsilon_0$. Below, we consider only ordinals that are less than $\varepsilon_0$. We don't make a distinguish between an ordinal $<\varepsilon_0$ and it's encoding.

We define {\it the function $\cmp\colon \varepsilon_0\to\omega$}. For an ordinal $\alpha=\omega^{\beta_0}+\ldots+\omega^{\beta_{n-1}}$ in Cantor normal form we put
$$\cmp(\alpha)=1+\cmp(\beta_0)+\ldots+\cmp(\beta_{n-1}).$$
Obviously, this gives us a unique function $\cmp$. For an ordinal $\alpha$, the amount of memory which is needed to store the code of $\alpha$ is $\Olarge(\cmp(\alpha))$.

We omit the proofs of two following the lemmas:
\begin{lemma} Ordinals $\alpha,\beta$ can be compared within time $\Olarge(\cmp(\alpha)+\cmp(\beta))$.
\end{lemma}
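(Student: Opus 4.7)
The plan is a direct recursive comparison on the tree representation of Cantor normal forms. An ordinal $\alpha<\varepsilon_0$ is stored as the sequence of CNF summands $\omega^{\beta_0},\ldots,\omega^{\beta_{n-1}}$, with each $\beta_i$ encoded recursively; I assume extraction of the head exponent $\beta_0$ and the tail $\alpha'=\omega^{\beta_1}+\ldots+\omega^{\beta_{n-1}}$ from a nonempty code takes $O(1)$ time. The routine $\mathrm{Cmp}(\alpha,\beta)$ handles the $0$-cases in $O(1)$, and otherwise decomposes both ordinals, calls $\mathrm{Cmp}(\beta_0,\gamma_0)$, returns that verdict if it is $<$ or $>$, and tail-recurses on $(\alpha',\beta')$ if it is $=$. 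Correctness reduces to the standard CNF fact that $\omega^{\beta_0}>\omega^{\gamma_0}\cdot k$ for every $k<\omega$ whenever $\beta_0>\gamma_0$, so the leading exponent strictly dominates and the lexicographic comparison of the exponent sequences yields the correct verdict.

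Each frame performs $O(1)$ work beyond its sub-calls, so it suffices to bound the number $N(\alpha,\beta)$ of frames produced by a top-level call. I will prove by induction on $\cmp(\alpha)+\cmp(\beta)$ that $N(\alpha,\beta)\le 2(\cmp(\alpha)+\cmp(\beta))-1$. The decisive identity is $\cmp(\alpha)=\cmp(\beta_0)+\cmp(\alpha')$, and its twin for $\beta$, which lets the two sub-calls in the equal case partition the budget exactly: $N(\alpha,\beta)=1+N(\beta_0,\gamma_0)+N(\alpha',\beta')\le 1+(2(\cmp(\alpha)+\cmp(\beta))-2)$. The unequal-leading-exponent case uses $\cmp(\beta_0)\le\cmp(\alpha)-1$ to maintain a strict drop, and the $0$ base cases are immediate.

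The main obstacle, though a minor one, is this constant management: the naive invariant $N(\alpha,\beta)\le K(\cmp(\alpha)+\cmp(\beta))$ fails because the two recursive calls already saturate the full budget, leaving no slack to pay for the $O(1)$ overhead of the enclosing frame. Subtracting the constant $1$ from the inductive hypothesis reserves exactly the slack required; standard $\textsc{RAM}$ bookkeeping then converts the count $N$ into a running time of $\Olarge(\cmp(\alpha)+\cmp(\beta))$.
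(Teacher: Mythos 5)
Your proposal is correct and is essentially the paper's own algorithm: a lexicographic comparison of the Cantor-normal-form exponent sequences with recursive calls to compare exponents, which you phrase as head/tail recursion where the paper phrases it as a loop over summands. Your explicit frame-count induction using $\cmp(\alpha)=\cmp(\beta_0)+\cmp(\alpha')$ supplies the linear-time bound that the paper dismisses as obvious, but it is the same approach.
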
 
\begin{proof} We will describe recursive algorithm. Suppose $\alpha=\omega^{\alpha_0}+\ldots+\omega^{\alpha_{n-1}}$ in Cantor normal form and $\beta=\omega^{\beta_0}+\ldots+\omega^{\beta_{m-1}}$ in Cantor normal form. Starting from $i=0$ we increase $i$ by $1$ until $i\ge\min(n,m)$ or $\alpha_i\ne\beta_i$, here we use recursive calls to compare $\alpha_i$ and $\beta_i$. If after this procedure $i=n=m$ then $\alpha=\beta$. If $i=n<m$ or $i<\min(n,m)$ and $\alpha_i<\beta_i$ then $\alpha<\beta$. Otherwise, $\alpha>\beta$.
 
The required time bound for this algorithm obviously holds.
\end{proof}

\begin{lemma} For ordinals $\alpha,\beta$, we can find an ordinal $\alpha+\beta$ within time $\Olarge(\cmp(\alpha)+\cmp(\beta))$ and we have $\cmp(\alpha+\beta)\le\cmp(\alpha)+\cmp(\beta)$.
\end{lemma}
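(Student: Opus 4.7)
The plan is to implement ordinal addition directly on Cantor normal forms. Writing $\alpha = \omega^{\alpha_0} + \ldots + \omega^{\alpha_{n-1}}$ and $\beta = \omega^{\beta_0} + \ldots + \omega^{\beta_{m-1}}$ in CNF (and disposing of the trivial cases $\alpha = 0$ or $\beta = 0$ up front), iterated use of the absorption identity $\omega^{\gamma} + \omega^{\delta} = \omega^{\delta}$ for $\gamma < \delta$ yields the closed form
$$\alpha + \beta \;=\; \omega^{\alpha_0} + \ldots + \omega^{\alpha_j} + \beta,$$
where $j$ is the largest index with $\alpha_j \ge \beta_0$, under the convention that the $\alpha$-prefix is empty (and $\alpha + \beta = \beta$) if no such index exists. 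Since $\alpha_j \ge \beta_0 \ge \beta_1 \ge \ldots \ge \beta_{m-1}$, the right-hand side is already a valid Cantor normal form and no further rearrangement is needed.

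The $\cmp$-bound is immediate from this formula: the output CNF consists of a subset of the monomials appearing in $\alpha$'s CNF together with all the monomials of $\beta$'s CNF, so
$$\cmp(\alpha + \beta) \;=\; 1 + \sum_{i \le j} \cmp(\alpha_i) + \sum_{k < m} \cmp(\beta_k) \;\le\; \cmp(\alpha) + \cmp(\beta).$$

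For the running-time bound, the algorithm scans the exponents $\alpha_{n-1}, \alpha_{n-2}, \ldots$ of $\alpha$'s CNF, at each step invoking the comparison routine of the previous lemma to test $\alpha_i$ against $\beta_0$ and discarding $\omega^{\alpha_i}$ whenever $\alpha_i < \beta_0$; as soon as a term with $\alpha_i \ge \beta_0$ is reached (or $\alpha$ is exhausted), the retained prefix is glued in front of the stored encoding of $\beta$ by a constant-time pointer splice, and the resulting code of size $\Olarge(\cmp(\alpha) + \cmp(\beta))$ is emitted. The main obstacle is that the naive per-call estimate $\Olarge(\cmp(\alpha_i) + \cmp(\beta_0))$ for each comparison against $\beta_0$ does not obviously sum to the desired linear bound — summed over $n$ calls one gets $\Olarge(\cmp(\alpha) + n \cdot \cmp(\beta_0))$, which may blow up. The resolution I expect is to share work across these comparisons: since every call contrasts some $\alpha_i$ with the same fixed $\beta_0$, and the recursion of the comparison algorithm only descends into $\beta_0$ as deeply as its Cantor normal form agrees with that of $\alpha_i$, one runs the scan as a single coordinated co-traversal of $\beta_0$'s CNF alongside the list of $\alpha$-exponents, charging each node of $\beta_0$ only once overall. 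With that bookkeeping the total cost of all the comparisons is $\Olarge(\cmp(\alpha) + \cmp(\beta_0))$, and adding the $\Olarge(\cmp(\alpha) + \cmp(\beta))$ cost of output assembly gives the claimed bound.
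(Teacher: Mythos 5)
Your proof is correct and follows the same route as the paper's: truncate the Cantor normal form of $\alpha$ at the first exponent below $\beta_0$, append $\beta$'s normal form, and read off the $\cmp$-bound from the resulting expression. The running-time subtlety you flag is real but resolves more simply than your co-traversal bookkeeping suggests: the recursive comparison of $\alpha_i$ with $\beta_0$ halts at the first point of disagreement, so each call actually costs $\Olarge(\min(\cmp(\alpha_i),\cmp(\beta_0))+1)$, and summing $\Olarge(\cmp(\alpha_i)+1)$ over the scanned indices already gives $\Olarge(\cmp(\alpha))$ without any shared-work argument.
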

\begin{proof} The cases of $\beta=0$ or $\alpha=0$ are trivial. Below we assume that $\alpha,\beta>0$. Suppose $\alpha=\omega^{\alpha_0}+\ldots+\omega^{\alpha_{n-1}}$ in Cantor normal form and $\beta=\omega^{\beta_0}+\ldots+\omega^{\beta_{m-1}}$ in Cantor normal form. We find the smallest $k<n$ such that $\alpha_i<\beta_0$. Obviously, the Cantor normal form of $\alpha+\beta$ is
$$\omega^{\alpha_0}+\ldots+\omega^{\alpha_{k-1}}+\omega^{\beta_k}+\ldots+\omega^{\beta_{m-1}}.$$

Linear time bound for this algorithm obviously holds.
\end{proof}


For ordinals $\alpha$ and $\beta$, $\alpha<\beta$ we encode the interval $[\alpha,\beta)=\{\gamma\mid \alpha\le\gamma <\beta\}$ by the pair $\langle \alpha,\beta \rangle$. Only intervals we consider in the present paper are intervals of this form. For an interval $A=[\alpha,\beta)$ we put $$\Ordlog(A)=\sup\{\ordlog(\gamma)+1\mid \gamma\in A\}.$$

\begin{lemma} \label{Ordlog_lemma} For a given interval  $A=[\alpha,\beta)$ such that $\ordlog(\alpha)=0$
\begin{enumerate}
\item  we can find $\Ordlog(A)$ within time $\Olarge(\cmp(\alpha)+\cmp(\beta))$;
\item  $\cmp(\Ordlog(A))\le \cmp(\beta)$;
\item \label{Ordlog_lemma_item3} $[0,\Ordlog(A))=\{\ordlog(\gamma)\mid\gamma\in A\}$.
\end{enumerate}
\end{lemma}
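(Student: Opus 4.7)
I prove all three items together by induction on the Cantor normal form of $\beta$. Write $\beta=\omega^{\beta_0}+\dots+\omega^{\beta_{m-1}}$ and set $\beta^{\sim}=\omega^{\beta_0}+\dots+\omega^{\beta_{m-2}}$ (with $\beta^{\sim}=0$ if $m=1$). The algorithm either returns a value directly in the base case $m=1$ or case-splits on $\alpha$ versus $\beta^{\sim}$ and recurses on a strictly smaller interval. Items~1 and~2 follow by tracking that the returned value is always built from CNF sub-terms of $\beta$ (so $\cmp(\Ordlog(A))\le\cmp(\beta)$) and that the work per recursion level is linear in the CNF terms it consumes; item~3 is maintained alongside the recursion.

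Base case $m=1$: $\beta=\omega^{\beta_0}$, and by direct analysis $\{\ordlog(\gamma):\gamma\in[\alpha,\beta)\}=[0,\beta_0)$, so $\Ordlog(A)=\beta_0$ -- any $\delta<\beta_0$ is witnessed by $\omega^\delta\cdot k$ for a large enough successor $k$ (with $k<\omega^{\beta_0-\delta}$, possible since $\beta_0-\delta>0$), and $\delta=0$ is witnessed by $\alpha$ itself. In Case A ($\alpha\le\beta^{\sim}$), $\beta^{\sim}\in A$ witnesses $\ordlog=\beta_{m-2}$, for each $\delta<\beta_{m-1}$ the ordinal $\beta^{\sim}+\omega^\delta\in A$ witnesses $\ordlog=\delta$, and the inductive call on $[\alpha,\beta^{\sim})$ returns an initial segment $[0,L')$ of witnesses. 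Because the inductive hypothesis guarantees $L'\ge\beta_{m-2}$ (verify by case analysis: base case gives $L'=\beta_0=\beta_{m-2}$, and the two recursive sub-cases both give $L'\ge\beta_{m-2}$ as well), the union is the initial segment $[0,\max(L',\beta_{m-2}+1))$, which is the value returned. In Case B ($\alpha>\beta^{\sim}$), every $\gamma\in A$ has the form $\beta^{\sim}+\mu$ with $\mu\in[\alpha-\beta^{\sim},\omega^{\beta_{m-1}})$ and $\ordlog(\gamma)=\ordlog(\mu)$, so the problem reduces to an inductive call on $[\alpha-\beta^{\sim},\omega^{\beta_{m-1}})$, whose $\beta$-part has strictly smaller CNF complexity. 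Summing, each recursion level performs a constant number of CNF comparisons, subtractions, and accesses, totalling $\Olarge(\cmp(\alpha)+\cmp(\beta))$ time.

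The main obstacle is verifying in Case B that the inductive hypothesis applies -- specifically that $\ordlog(\alpha-\beta^{\sim})=0$, so the hypothesis of the lemma carries over to the recursive sub-problem. Since $\beta^{\sim}<\alpha<\beta^{\sim}+\omega^{\beta_{m-1}}$, we have $\alpha-\beta^{\sim}\in(0,\omega^{\beta_{m-1}})$, and the leading CNF exponent of $\alpha-\beta^{\sim}$ is strictly less than $\beta_{m-1}\le\beta_{m-2}$; this forces the CNF of $\alpha$ to be the concatenation of $\beta^{\sim}$'s CNF with $(\alpha-\beta^{\sim})$'s, so $\alpha$ and $\alpha-\beta^{\sim}$ share their last CNF exponent and in particular $\ordlog(\alpha-\beta^{\sim})=\ordlog(\alpha)=0$. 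This is exactly what makes the Case~B example $[\omega^\omega+1,\omega^\omega\cdot 2)$ work out correctly: it reduces to $[1,\omega^\omega)$, for which the base case returns $\Ordlog=\omega$, giving the right value $\omega$ (and not the "naive" $\omega+1$ that Case~A would have produced, the point being that $\omega^\omega\notin A$).
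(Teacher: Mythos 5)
Your recursion on the last Cantor normal form term of $\beta$ is a genuinely different route from the paper's proof, which computes $\Ordlog(A)$ in one pass from the first index $k$ at which the exponent sequences of $\alpha$ and $\beta$ diverge and then reads the answer off $\beta_k$. Your scheme could be made to work, but as written it has two concrete defects. First, the base case is wrong when $\beta_0=0$: for $A=[0,1)$ you return $\beta_0=0$, whereas $\Ordlog([0,1))=\sup\{\ordlog(0)+1\}=1$. The underlying omission is that $\alpha$ itself always witnesses the value $0$ (this is exactly where the hypothesis $\ordlog(\alpha)=0$ enters), so $\Ordlog(A)\ge 1$ for every nonempty $A$; the paper's formula contains a $\max(\,\cdot\,,1)$ precisely to absorb this. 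This is not a throwaway case: intervals such as $[\gamma,\gamma+1)$ do occur among the $A_i$ of a code, and $\Ordlog(A_i)$ must be a positive ordinal for $\codes^{\,n-1}_{\Ordlog(A_i)}$ to be defined. Relatedly, in Case A with $\alpha=\beta^{\sim}$ you recurse on the empty interval $[\alpha,\beta^{\sim})$, which the lemma's hypotheses exclude; this situation is reachable (e.g.\ $[\omega^{\gamma}+1,\omega^{\gamma}+2)$) and needs a separate clause.

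Second, the claimed $\Olarge(\cmp(\alpha)+\cmp(\beta))$ running time is not justified by your accounting. Each recursion level performs a fresh comparison of $\alpha$ against the current $\beta^{\sim}$, there can be up to $m$ consecutive Case-A levels all carrying the same $\alpha$, and a single such comparison can itself cost $\Olarge(\cmp(\alpha))$ when $\alpha$ and $\beta$ share a long common CNF prefix (take $\alpha=\omega^{\gamma}+1$ and $\beta=\omega^{\gamma}+N$ with $\cmp(\gamma)$ comparable to $N$: every level re-compares the exponent $\gamma$ with itself). So the argument only yields a quadratic bound. The natural repair is to locate, once and up front, the first index where the CNFs of $\alpha$ and $\beta$ differ --- but at that point the answer can be read off directly, which collapses your algorithm into the paper's non-recursive one. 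The remainder of your argument --- the witnesses $\beta^{\sim}+\omega^{\delta}$, the verification that $\ordlog(\alpha-\beta^{\sim})=0$ in Case B so the inductive hypothesis applies, and the check that $L'\ge\beta_{m-2}$ so the union of witness sets is an initial segment --- is sound, and item 2 follows as you say.
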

\begin{proof}
Suppose Cantor normal forms of ordinals $\alpha$ and $\beta$ are
$$\alpha=\omega^{\alpha_0}+\ldots+\omega^{\alpha_{n-1}}\mbox{ and}$$
$$\beta=\omega^{\beta_0}+\ldots+\omega^{\beta_{m-1}}.$$
 Let $k=\min(\{n\}\cup \{i\mid \alpha_i<\beta_i\})$. Obviously, $k<m$. 

Let $\zeta=\max(\beta_k,1)$ if $k=m$, and let $\zeta=\beta_k+1$ otherwise. 

\textbf{Claim:} $\{\ordlog(\gamma)\mid\gamma\in A)\}=[0,\zeta)$. 

First, we consider any $\gamma\in A$ and prove that $\ordlog(\gamma)<\zeta$.  The Cantor normal form of the ordinal $\gamma$ is
$$\gamma=\omega^{\alpha_0}+\ldots+\omega^{\alpha_{k-1}}+\omega^{\gamma_0}+\ldots+\omega^{\gamma_{l-1}},$$
where all $\gamma_i\le \beta_k$ and if $k=m$ then all $\gamma_i<\beta_k$. If $l=0$ then we have $\alpha=\gamma$ and $\ordlog(\gamma)=\ordlog(\alpha)=0$. 

Suppose $l\ne 0$. We have $\ordlog(\gamma)=\gamma_{l-1}\le \beta_k$ if $k>m$ and we have  $\ordlog(\delta)=\gamma_{l-1}< \beta_k$ if $k=m$.  Hence  $\ordlog(\gamma)<\zeta$. 

Now, we consider any $\delta\in [0,\zeta)$ and find a $\gamma\in A$ such that $\ordlog(\gamma)=\delta$. If $\delta=0$ then we can choose $\gamma=\alpha$. Otherwise we choose $\gamma=\alpha+\omega^{\delta}$; obviously then $\gamma\in A$.  This complete the proof of the claim. 

Therefore $\Ordlog(A)=\zeta$. Obviously, $\zeta$ can be found within time $\Olarge(\cmp(\alpha)+\cmp(\beta))$. We have $\cmp(\zeta)\le \cmp(\beta_k)+1\le \cmp(\beta)$. This finishes the proof of the Lemma.
\end{proof}

For every ordinal $\alpha>0$ and every number $n$ we will define the set of codes $\codes^n_\alpha$ and the evaluation function $\ev^n_\alpha\colon \codes^n_\alpha\to \mathcal{P}(U^n_\alpha)$. Every set $\codes^0_\alpha$ is just the set $\{0,1\}$. We put $\ev^0_\alpha(0)=\emptyset$ and $\ev^0_\alpha(1)=\{()\}$ (the set contains only the empty sequence $()$). For $n>0$ elements of $\codes^n_\alpha$ are tuples consists of
\begin{enumerate}
\item number $m$;
\item $A_0,\ldots,A_{m-1}$, where every $A_i$ is a nonempty interval $[\beta_i,\gamma_i)$, for every $i$, we have $\beta_i\ne \in \mathrm{Lim}$, $\bigsqcup\limits_{i<m} A_i=[0,\alpha)$, and for all $i<m-1$, we have $\gamma_i=\beta_{i+1}$;
\item $c_0\in U^{n-1}_{\Ordlog(A_0)},\ldots,c_{m-1}\in U^{n-1}_{\Ordlog(A_{n-1})}$.
\end{enumerate}
From formal point of view for $n>0$ an element $d\in \codes^n_\alpha$ is a triple $(n,\overline{A},\overline{c})$. We put $$\ev^n_\alpha((m,\overline{A},\overline{c}))=\bigsqcup\limits_{i<m} \{(\beta_0,\beta_1,\ldots,\beta_{n-1})\in U^n_\alpha\mid \beta_0\in A_i, (\beta_1,\ldots,\beta_{n-1})\in \ev^{n-1}_{\ordlog(A_i)}(c_i)\}.$$
Suppose $A$ is a subset of $U^n_\alpha$ and $c$ is an element of $\codes^n_\alpha$ such that $\ev^n_\alpha(c)=A$. Then we say that $c$ is a code for $A$. 

For every number $n$ and every ordinal $\alpha\in [1,\varepsilon_0)$, we introduce functions $\width^n_\alpha\colon \codes^n_\alpha\to \omega$ and $\ordcmp^n_\alpha\colon \codes^n_\alpha\to \omega$ in order to measure complexity of codes
\begin{itemize}
\item $\width^0_\alpha(c)=1$;
\item $\width^{n+1}_\alpha((m,\overline{A},\overline{c}))=\max(\{m\}\cup \{\width^n_{\alpha}(c_i)\mid i<m\})$;
\item $\ordcmp^0_\alpha(c)=1$;
\item $\ordcmp^{n+1}_\alpha((m,\overline{A},\overline{c}))=\max\limits_{i<n}\cmp(\beta_i)\;+\;\max\limits_{i<n}\ordcmp^n_{\Ordlog([\beta_i,\gamma_i))}(c_i)$, where every $A_i=[\beta_i,\gamma_i)$.
\end{itemize}

Obviously, the following two lemmas holds
\begin{lemma} \label{cmpest_lemma}Suppose $c=(m,\overline{A},\overline{d})\in\codes^n_\alpha$ and for every $i<m$, the interval $A_i=[\beta_i,\gamma_i)$. Then for all $i<m$, we have $\cmp(\beta_i)\le\ordcmp(c)$, $\cmp(\gamma_i)\le \max(\cmp(\alpha),\ordcmp(c))$, $\ordcmp(d_i)\le \ordcmp(c)$, and $\cmp(\Ordlog(A_i))\le \max(\cmp(\alpha),\ordcmp(c))$.
\end{lemma}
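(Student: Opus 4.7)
The plan is to read all four bounds directly off the recursive definition of $\ordcmp^{n+1}_\alpha$ together with the structural constraints imposed on the tuple $c=(m,\overline{A},\overline{d})$, using Lemma \ref{Ordlog_lemma} for the last inequality. Unpacking the definition,
\[
\ordcmp^{n+1}_\alpha(c)=\max_{i<m}\cmp(\beta_i)\;+\;\max_{i<m}\ordcmp^n_{\Ordlog(A_i)}(d_i),
\]
so the first bound $\cmp(\beta_i)\le\ordcmp(c)$ and the third bound $\ordcmp(d_i)\le\ordcmp(c)$ are immediate: each quantity on the left is one of the terms inside one of the two maxima, and the other maximum is non-negative.

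For the second bound on $\cmp(\gamma_i)$ I would exploit the partition requirement in the definition of a code, namely that $\gamma_i=\beta_{i+1}$ for every $i<m-1$ and that $\bigsqcup_{i<m}A_i=[0,\alpha)$ forces $\gamma_{m-1}=\alpha$. For indices $i<m-1$ this reduces the bound to $\cmp(\beta_{i+1})\le\ordcmp(c)$, which is the first inequality; the boundary case $i=m-1$ gives exactly $\cmp(\gamma_i)=\cmp(\alpha)$, which is why the $\max$ with $\cmp(\alpha)$ appears on the right.

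The fourth bound is then a one-line consequence of Lemma \ref{Ordlog_lemma}, item~2, which yields $\cmp(\Ordlog(A_i))\le\cmp(\gamma_i)$. Its hypothesis $\ordlog(\beta_i)=0$ is secured by the coding convention that each $\beta_i$ is not a limit ordinal, since the Cantor normal form of a non-limit ordinal has last exponent $0$. Combining with the second inequality finishes the proof. There is no real obstacle; the only point that requires any care is the boundary index $i=m-1$, which is precisely the reason the last two inequalities carry the extra $\max$ with $\cmp(\alpha)$.
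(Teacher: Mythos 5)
Your proof is correct and is exactly the routine verification the paper intends: the paper states this lemma without proof (prefaced by ``Obviously, the following two lemmas holds''), and your unpacking of the definition of $\ordcmp^{n+1}_\alpha$, the boundary observation $\gamma_{m-1}=\alpha$, and the appeal to Lemma~\ref{Ordlog_lemma}(2) (whose hypothesis $\ordlog(\beta_i)=0$ is indeed guaranteed by the requirement that each $\beta_i$ be a non-limit ordinal) supply precisely the missing details.
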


\begin{lemma} The amount of memory which is needed to store a given code $c\in \codes^n_\alpha$ is $\Olarge((\ordcmp^n_{\alpha}(c)+\cmp(\alpha))\cdot(\width^n_\alpha(c))^n)$.
\end{lemma}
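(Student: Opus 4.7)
The plan is to prove the bound by induction on $n$. The base case $n=0$ is immediate since $\codes^0_\alpha = \{0,1\}$ requires only $\Olarge(1)$ memory to store, while the claimed bound $(\ordcmp^0_\alpha(c)+\cmp(\alpha))\cdot(\width^0_\alpha(c))^0 = \ordcmp^0_\alpha(c)+\cmp(\alpha)$ is at least $1+1=2$ (since $\alpha\ge 1$ gives $\cmp(\alpha)\ge 1$ and $\ordcmp^0_\alpha(c)=1$ by definition). So the base case holds trivially.

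For the inductive step, let $c=(m,\overline{A},\overline{c})\in\codes^n_\alpha$ with $A_i=[\beta_i,\gamma_i)$. Because consecutive intervals agree at their endpoints ($\gamma_i=\beta_{i+1}$), storing $\overline{A}$ amounts to storing the $m+1$ ordinals $\beta_0,\beta_1,\ldots,\beta_{m-1},\gamma_{m-1}$. The total memory therefore decomposes as: $\Olarge(1)$ for $m$ itself; $\Olarge\bigl(\sum_{i<m} \cmp(\beta_i) + \cmp(\gamma_{m-1})\bigr)$ for the endpoints; and the sum of memory for the $m$ sub-codes $c_i\in \codes^{n-1}_{\Ordlog(A_i)}$.

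Now apply Lemma \ref{cmpest_lemma}: each $\cmp(\beta_i)\le\ordcmp^n_\alpha(c)$, $\cmp(\gamma_{m-1})\le\max(\cmp(\alpha),\ordcmp^n_\alpha(c))$, each $\cmp(\Ordlog(A_i))\le\max(\cmp(\alpha),\ordcmp^n_\alpha(c))$, each $\ordcmp^{n-1}_{\Ordlog(A_i)}(c_i)\le\ordcmp^n_\alpha(c)$, and $\width^{n-1}_{\Ordlog(A_i)}(c_i)\le\width^n_\alpha(c)$ (the last from the definition of $\width$). Writing $W=\width^n_\alpha(c)$, $C=\ordcmp^n_\alpha(c)$, $P=\cmp(\alpha)$, the endpoint storage is $\Olarge(m\cdot(C+P))$, and by the induction hypothesis each sub-code takes memory $\Olarge((C+P)\cdot W^{n-1})$. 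Summing over $m\le W$ sub-codes yields total memory $\Olarge(W\cdot(C+P)) + \Olarge(W\cdot(C+P)\cdot W^{n-1}) = \Olarge((C+P)\cdot W^n)$, as required.

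The only point that needs care is ensuring the $\cmp(\alpha)$ term does not accumulate as a factor of $W$ at each recursive level, but this is precisely what Lemma \ref{cmpest_lemma} prevents: since $\cmp(\Ordlog(A_i))$ is bounded by $\max(P,C)$ rather than by $P$ alone times some growth factor, the ``ambient ordinal complexity'' at the next level is absorbed into the inductive estimate without any multiplicative blow-up. Beyond that observation the proof is pure bookkeeping, so I do not expect a genuine obstacle.
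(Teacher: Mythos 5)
Your proof is correct and is exactly the routine structural induction the paper has in mind (the paper states this lemma without proof, introducing it as obvious). The base case, the decomposition of the storage of $(m,\overline{A},\overline{c})$ into $m$, the $m+1$ interval endpoints, and the sub-codes, and the use of Lemma \ref{cmpest_lemma} to keep the $\cmp(\Ordlog(A_i))$ and $\ordcmp$ terms bounded by $\max(\cmp(\alpha),\ordcmp^n_\alpha(c))$ at every level all check out.
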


\begin{lemma} Suppose $n\ge 0$. Then there exists a computable function $\cmpscr{IsEmp}_n(\alpha,c)$ such that for arguments $0<\alpha<\varepsilon_0$ and $c\in \codes^n_\alpha$:
\begin{enumerate}
\item $\cmpscr{IsEmp}_n$ returns $1$ if $\ev^n_\alpha(c)=\emptyset$ and returns $0$ otherwise;
\item running time of $\cmpscr{IsEmp}_n$ is $\Olarge(\max(\ordcmp^n_\alpha(c),\cmp(\alpha))\cdot(\width^n_\alpha(c))^n)$.
\end{enumerate}
\end{lemma}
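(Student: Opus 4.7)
The plan is to define $\cmpscr{IsEmp}_n$ by recursion on $n$, mirroring the recursive structure of codes. For $n=0$, since $\codes^0_\alpha=\{0,1\}$ with $\ev^0_\alpha(0)=\emptyset$ and $\ev^0_\alpha(1)=\{()\}$, the algorithm returns $1$ iff $c=0$, in constant time. For $n>0$, parse $c=(m,\overline{A},\overline{d})$; for each $i<m$ compute $\Ordlog(A_i)$ using Lemma \ref{Ordlog_lemma} (applicable because the constraint $\beta_i\notin\mathrm{Lim}$ forces $\ordlog(\beta_i)=0$), recursively invoke $\cmpscr{IsEmp}_{n-1}(\Ordlog(A_i),d_i)$, and return $1$ iff all recursive calls return $1$.

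Correctness reduces to the claim
\[
\ev^n_\alpha(c)=\emptyset \iff \forall i<m\;\; \ev^{n-1}_{\Ordlog(A_i)}(d_i)=\emptyset.
\]
The direction $(\Leftarrow)$ is immediate from the definition of $\ev^n_\alpha$. For $(\Rightarrow)$ I argue the contrapositive: given a tail $(\beta_1,\ldots,\beta_{n-1})\in\ev^{n-1}_{\Ordlog(A_i)}(d_i)$, the hypothesis $\beta_1<\Ordlog(A_i)$ together with Lemma \ref{Ordlog_lemma}(\ref{Ordlog_lemma_item3}) supplies some $\beta_0\in A_i$ with $\ordlog(\beta_0)=\beta_1$; then $(\beta_0,\beta_1,\ldots,\beta_{n-1})\in U^n_\alpha$, since $\beta_0<\alpha$ (the $A_i$ partition $[0,\alpha)$), $\beta_1\le\ordlog(\beta_0)$ by construction, and the remaining $\ordlog$ inequalities are inherited from membership in $U^{n-1}_{\Ordlog(A_i)}$.

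For the running time, step~1 costs $\sum_{i<m}\Olarge(\cmp(\beta_i)+\cmp(\gamma_i))$, which by Lemma \ref{cmpest_lemma} is $\Olarge(\width^n_\alpha(c)\cdot\max(\cmp(\alpha),\ordcmp^n_\alpha(c)))$. For step~2, the induction hypothesis gives cost $\Olarge(\max(\ordcmp^{n-1}_{\Ordlog(A_i)}(d_i),\cmp(\Ordlog(A_i)))\cdot\width^{n-1}_{\Ordlog(A_i)}(d_i)^{n-1})$ for each call; applying Lemma \ref{cmpest_lemma} to bound $\ordcmp(d_i)\le\ordcmp(c)$, $\cmp(\Ordlog(A_i))\le\max(\cmp(\alpha),\ordcmp(c))$, and $\width(d_i)\le\width(c)$, each call is $\Olarge(\max(\cmp(\alpha),\ordcmp(c))\cdot\width(c)^{n-1})$, and multiplying by $m\le\width(c)$ gives the stated bound.

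The main obstacle is the correctness reduction: a priori a tail in $\ev^{n-1}_{\Ordlog(A_i)}(d_i)$ need not extend to a member of $U^n_\alpha$ by any $\beta_0\in A_i$, because $U^n_\alpha$ demands $\beta_1\le\ordlog(\beta_0)$. What makes the reduction work is precisely the surjectivity $\{\ordlog(\gamma)\mid\gamma\in A_i\}=[0,\Ordlog(A_i))$ from Lemma \ref{Ordlog_lemma}(\ref{Ordlog_lemma_item3}): every admissible first coordinate of a tail is realized as $\ordlog(\beta_0)$ for some $\beta_0\in A_i$, so no tail is orphaned and emptiness propagates cleanly through the recursion.
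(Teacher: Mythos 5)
Your proof is correct and follows the same route as the paper: induction on $n$, with the recursive step reducing emptiness of $\ev^n_\alpha(c)$ to emptiness of each $\ev^{n-1}_{\Ordlog(A_i)}(d_i)$ via Lemma~\ref{Ordlog_lemma}, Item~\ref{Ordlog_lemma_item3}. You simply spell out the two details the paper leaves implicit --- the contrapositive argument that no nonempty tail is ``orphaned'' and the bookkeeping for the running-time bound --- and both are carried out correctly.
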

\begin{proof}We prove this Lemma by induction on $n$. Suppose $n=0$. Then for a given $\alpha$ and $c$, we have $\ev^n_\alpha(c)=\emptyset$ iff $c=0$. This gives us the function $\cmpscr{IsEmp}_0$.

Now we consider the case of $n>0$.  From the Lemma \ref{Ordlog_lemma} Item \ref{Ordlog_lemma_item3} it follows that for a given  ordinal $\alpha$ and a given code $c=(k,\overline{A},\overline{d})$ the evaluation $\ev^n_\alpha(c)=\emptyset$ iff for all $i<k$ we have $\ev^{n-1}_{\Ordlog(A_i)}(d_i)=\emptyset$. Whether the right part of the last equivalence holds can be checked by $k$ calls of $\cmpscr{IsEmp}_{n-1}$ with $\Olarge(\max(\ordcmp^n_\alpha(c),\cmp(\alpha))\cdot(\width^n_\alpha(c))^{n-1})$ time upper bound each. This gives us $\cmpscr{IsEmp}_n$ with running time $\Olarge(\max(\ordcmp^n_\alpha(c),\cmp(\alpha))\cdot(\width^n_\alpha(c))^n)$.
\end{proof}

\begin{lemma} Suppose $n\ge 0$. Then there exists a computable function $\cmpscr{Cmpl}_n(\alpha,c)$ such that for arguments $0<\alpha<\varepsilon_0$ and  $c\in\codes^n_\alpha$:
\begin{enumerate}
\item $\cmpscr{Cmpl}_n(\alpha,c)\in\codes^n_\alpha$;
\item $\ev^n_\alpha(\cmpscr{Cmpl}_n(\alpha,c))=U^n_\alpha\setminus \ev^n_\alpha(c)$;
\item $\ordcmp^n_\alpha(\cmpscr{Cmpl}_n(\alpha,c))=\ordcmp^n_\alpha(c)$,  $\width^n_\alpha(\cmpscr{Cmpl}_n(\alpha,c))=\width^n_\alpha(d)$;
\item running time of $\cmpscr{Cmpl}_n$ is $\Olarge(\max(\ordcmp^n_\alpha(c),\cmp(\alpha))\cdot(\width^n_\alpha(c))^n)$.
\end{enumerate}
\end{lemma}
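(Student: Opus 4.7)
The plan is to prove the lemma by induction on $n$, mirroring the structure used for $\cmpscr{IsEmp}_n$. The base case $n=0$ is trivial: $\codes^0_\alpha = \{0,1\}$, and we set $\cmpscr{Cmpl}_0(\alpha,0) = 1$, $\cmpscr{Cmpl}_0(\alpha,1) = 0$; the evaluations swap, the complexity measures are the constant $1$, and the runtime is $\Olarge(1)$, well within the stated bound.

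For the inductive step with $n > 0$, given $c = (m,\overline{A},\overline{d}) \in \codes^n_\alpha$, I would define
$$\cmpscr{Cmpl}_n(\alpha,c) \;=\; (m,\overline{A},\overline{d'}), \qquad d'_i \;=\; \cmpscr{Cmpl}_{n-1}(\Ordlog(A_i),d_i).$$
The key structural observation is that the intervals $A_0,\ldots,A_{m-1}$ partition $[0,\alpha)$, so they cut $U^n_\alpha$ into disjoint slabs $T_i = \{(\beta_0,\ldots,\beta_{n-1}) \in U^n_\alpha \mid \beta_0 \in A_i\}$, and $\ev^n_\alpha(c) = \bigsqcup_i S_i$ with $S_i \subseteq T_i$ determined by $d_i$. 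Complementing the overall set therefore amounts to complementing each $d_i$ inside its own slab. To see that the inductive complement in $\codes^{n-1}_{\Ordlog(A_i)}$ is the right operation, note that for any $\beta_0 \in A_i$ the admissibility condition $\beta_1 \le \ordlog(\beta_0)$ forces $\beta_1 < \ordlog(\beta_0)+1 \le \Ordlog(A_i)$, so the projection of $T_i$ onto the tail coordinates is exactly $U^{n-1}_{\Ordlog(A_i)}$; hence $T_i \setminus S_i$ corresponds fibrewise to $U^{n-1}_{\Ordlog(A_i)} \setminus \ev^{n-1}_{\Ordlog(A_i)}(d_i)$, which by the inductive hypothesis is coded by $d'_i$. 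Validity of the output as a member of $\codes^n_\alpha$ is inherited from $c$, since only the inner codes change.

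For the complexity bookkeeping, the partition $\overline{A}$ and the number $m$ are preserved, so $\width^n_\alpha$ is unchanged; and since each $\ordcmp^{n-1}_{\Ordlog(A_i)}(d'_i) = \ordcmp^{n-1}_{\Ordlog(A_i)}(d_i)$ by induction while the maxed-in endpoints $\cmp(\beta_i)$ are untouched, $\ordcmp^n_\alpha$ is also unchanged. For the runtime, Lemma \ref{Ordlog_lemma} computes each $\Ordlog(A_i)$ in time $\Olarge(\cmp(\beta_i)+\cmp(\gamma_i))$; then Lemma \ref{cmpest_lemma} bounds $\ordcmp^{n-1}_{\Ordlog(A_i)}(d_i) \le \ordcmp^n_\alpha(c)$ and $\cmp(\Ordlog(A_i)) \le \max(\cmp(\alpha),\ordcmp^n_\alpha(c))$, so each of the $m$ recursive calls costs $\Olarge(\max(\ordcmp^n_\alpha(c),\cmp(\alpha)) \cdot (\width^n_\alpha(c))^{n-1})$. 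Summing over $i < m \le \width^n_\alpha(c)$ and absorbing the interval-arithmetic overhead gives the claimed bound $\Olarge(\max(\ordcmp^n_\alpha(c),\cmp(\alpha)) \cdot (\width^n_\alpha(c))^n)$.

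I expect no serious obstacle here: once one notices that complementation preserves the partition and only needs to be pushed into each tail-code, correctness is immediate from the fibrewise identification above, and the rest is routine bookkeeping with the previously established lemmas. The only place to be careful is ensuring the inner ordinal parameter $\Ordlog(A_i)$ is indeed admissible for the recursive call — which it is, since $\beta_i$ is a successor (or $0$) and Lemma \ref{Ordlog_lemma} applies.
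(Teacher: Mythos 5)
Your proof is correct and follows essentially the same route as the paper: induction on $n$, swapping $0$ and $1$ in the base case, and in the inductive step replacing each inner code $d_i$ by $\cmpscr{Cmpl}_{n-1}(\Ordlog(A_i),d_i)$ while keeping the partition $\overline{A}$ fixed, with correctness justified fibrewise via Lemma \ref{Ordlog_lemma} and the complexity bounds via Lemma \ref{cmpest_lemma}. No discrepancies worth noting.
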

\begin{proof} We prove this Lemma by induction on $n$. 

Consider the case $n=0$. Suppose an input $(\alpha,c)$ is given. Then we put $\cmpscr{Cmpl}_0(\alpha,c)=0$ if $c=1$ and we put $\cmpscr{Cmpl}_0(\alpha,c)=1$ otherwise. Obviously, this gives us $\cmpscr{Cmpl}_0$ that satisfies all required conditions.

Now, consider the case of $n>0$. Suppose an input $(\alpha,c)$ is given and $c=(m,\overline{A},\overline{d})$. We put $\cmpscr{Cmpl}_n(\alpha,c)=(m,\overline{A},\overline{e})$, where the vector $\overline{e}=(e_0,\ldots,e_{m-1})$ and for every $i<m$, $e_i=\cmpscr{Cmpl}_{n-1}(\Ordlog(A_i),d_i)$. This gives us the computable function $\cmpscr{Cmpl}_n$. Obviously, $\ordcmp^n_\alpha(\cmpscr{Cmpl}_{n}(\alpha,c))=\ordcmp^n_\alpha(c)$,  $\width^n_\alpha(\cmpscr{Cmpl}_{n}(\alpha,c))=\width^n_\alpha(d)$. From Lemma \ref{Ordlog_lemma} and inductive hypothesis it follows that $\ev^n_\alpha(\cmpscr{Cmpl}_n(\alpha,c))=U^n_\alpha\setminus \ev^n_\alpha(c)$.  From inductive hypothesis and Lemma \ref{cmpest_lemma} it follows that running time of $\cmpscr{Cmpl}_n(\alpha,c)$ is $\Olarge(\width^n_\alpha(c)^n\cdot\max(\cmp(\alpha),\ordcmp(c)))$. 
\end{proof}

Obviously, the following lemma holds
\begin{lemma} \label{Emptyset_lemma}Suppose $n\ge 0$. Then there exists a computable function $\cmpscr{EmpS}_n(\alpha)$ such that for a given ordinal $\alpha>0$ it returns within time $\Olarge(\cmp(\alpha))$ a code $c\in \codes^n_\alpha$ such that $\ev^n_\alpha(c)=\emptyset$, $\ordcmp^n_\alpha(c)=n$, and $\width^n_\alpha(c)=1$ .\end{lemma}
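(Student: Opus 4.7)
The plan is to construct $\cmpscr{EmpS}_n$ recursively, together with the verification of its output properties, by induction on $n$. At the base $n=0$, I will simply set $\cmpscr{EmpS}_0(\alpha)=0$; then by definition $\ev^0_\alpha(0)=\emptyset$, and both $\ordcmp^0_\alpha(0)$ and $\width^0_\alpha(0)$ take their default values of $1$, while the output is produced in constant time, which is trivially $\Olarge(\cmp(\alpha))$.

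For the inductive step, given $\alpha>0$, I will exploit the most compact code for $\emptyset\subseteq U^n_\alpha$: a single block that stretches over the whole first coordinate. Concretely, I will first compute $\zeta = \Ordlog([0,\alpha))$ by Lemma \ref{Ordlog_lemma} (which applies since $\ordlog(0)=0$), then obtain $c' = \cmpscr{EmpS}_{n-1}(\zeta)$ by recursion, and finally return $(1,([0,\alpha)),(c'))$. To see that this tuple is a legal element of $\codes^n_\alpha$: $\alpha>0$ makes $[0,\alpha)$ nonempty, $0$ is not a limit ordinal, the one-block partition covers $[0,\alpha)$ trivially, and $c'\in\codes^{n-1}_\zeta$ by the inductive hypothesis.

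Unwinding the evaluation formula, $\ev^n_\alpha$ of the returned code collects all $(\beta_0,\ldots,\beta_{n-1})\in U^n_\alpha$ whose tail $(\beta_1,\ldots,\beta_{n-1})$ lies in $\ev^{n-1}_\zeta(c')$; by the inductive hypothesis the inner set is empty, so the whole set is empty. The width propagates as $\width^n_\alpha = \max(1,\width^{n-1}_\zeta(c')) = 1$. The ordinal complexity propagates as $\ordcmp^n_\alpha = \cmp(0) + \ordcmp^{n-1}_\zeta(c')$, which increases by exactly $1$ at each recursive level and so yields the stated value at depth $n$.

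For running time, the only nontrivial computation at each level is $\Ordlog([0,\alpha))$, which costs $\Olarge(\cmp(\alpha))$ by Lemma \ref{Ordlog_lemma} and produces $\zeta$ with $\cmp(\zeta)\le\cmp(\alpha)$. Consequently every one of the (constantly many, in $n$) recursive levels sees an input ordinal of $\cmp$ at most $\cmp(\alpha)$ and does $\Olarge(\cmp(\alpha))$ work, so the total cost fits inside $\Olarge(\cmp(\alpha))$. I do not anticipate a genuine obstacle: the construction is essentially forced by the shape of the code format, and the ordinal and $\cmp$ estimates required for the running-time bookkeeping are already supplied by the preceding lemmas.
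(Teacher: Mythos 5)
The paper gives no proof of this lemma (it is stated as obvious), and your construction --- the single-block code $(1,([0,\alpha)),(\cmpscr{EmpS}_{n-1}(\Ordlog([0,\alpha)))))$ over the base $\cmpscr{EmpS}_0(\alpha)=0$ --- is exactly the forced one, and your verification of legality, emptiness, width, and running time is sound (in particular you correctly note that Lemma \ref{Ordlog_lemma} applies because $\ordlog(0)=0$ and that $\cmp(\Ordlog([0,\alpha)))\le\cmp(\alpha)$ keeps each recursive level within $\Olarge(\cmp(\alpha))$). The one place where your bookkeeping does not match your own recurrence is the value of $\ordcmp$: since $\cmp(0)=1$ (the empty Cantor sum still contributes the leading $1$) and $\ordcmp^0_\alpha\equiv 1$ by definition, the recursion $\ordcmp^n=\cmp(0)+\ordcmp^{n-1}$ yields $n+1$, not the stated $n$; indeed, under the paper's definitions every code in $\codes^n_\alpha$ has $\ordcmp\ge n+1$, so the equality ``$\ordcmp^n_\alpha(c)=n$'' in the lemma is an off-by-one in the paper itself rather than something your construction could achieve. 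You should state the value you actually get ($n+1$) instead of asserting it ``yields the stated value''; this costs nothing downstream, since the later lemmas and the proof of Theorem \ref{n_fragment} only need an $\Olarge(n)$ bound on $\ordcmp$ of the empty-set code.
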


\begin{lemma} \label{Infinum_lemma}Suppose $n>0$. Then there exists a computable function $\cmpscr{Inf}_n(\alpha,c)$ such that for arguments $0<\alpha<\varepsilon_0$ and $c\in\codes^n_\alpha$, $\ev^n_\alpha(c)\ne \emptyset$:
\begin{enumerate}
\item $\cmpscr{Inf}_n(\alpha,c)$ is an ordinal;
\item \label{infinum_lemma_item3}$\cmpscr{Inf}_n(\alpha,c)=\inf\{\gamma_0\mid \exists \gamma_1,\ldots,\gamma_{n-1}((\gamma_0,\ldots,\gamma_{n-1})\in \ev^n_\alpha(c))\}$;
\item $\cmp(\cmpscr{Inf}_n(\alpha,c))\le\ordcmp(c)$; 
\item \label{infinum_lemma_item5}$\cmpscr{Inf}_n$ running time is $\Olarge(\max(\ordcmp^n_\alpha(c),\cmp(\alpha))\cdot(\width^n_\alpha(c))^n)$.
\end{enumerate}
\end{lemma}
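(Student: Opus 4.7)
The plan is to induct on $n$, implementing $\cmpscr{Inf}_n$ by scanning the blocks of the partition to locate the first one that contributes to $\ev^n_\alpha(c)$, and then constructing the minimal witness inside that block via one recursive call on the matching $d_i$.

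For $n=1$, each $d_i$ lies in $\codes^0_{\Ordlog(A_i)}=\{0,1\}$, and block $A_i$ contributes iff $d_i=1$; scan $i=0,1,\ldots$ for the least such index and return $\beta_i$. For $n>1$, scan $i=0,1,\ldots,m-1$ invoking $\cmpscr{IsEmp}_{n-1}(\Ordlog(A_i),d_i)$ at each step (computing $\Ordlog(A_i)$ via Lemma \ref{Ordlog_lemma}). By Lemma \ref{Ordlog_lemma}(\ref{Ordlog_lemma_item3}), every $\eta_1<\Ordlog(A_i)$ occurs as $\ordlog(\gamma)$ for some $\gamma\in A_i$, so the $U^n_\alpha$-constraint $\gamma_1\le\ordlog(\gamma_0)$ never prevents pairing a witness from $\ev^{n-1}_{\Ordlog(A_i)}(d_i)$ with a valid $\gamma_0\in A_i$; thus block $A_i$ contributes iff $\ev^{n-1}_{\Ordlog(A_i)}(d_i)\neq\emptyset$. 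Because the $A_j$'s partition $[0,\alpha)$ consecutively, the sought infimum is attained in the earliest non-empty block; call its index $i$.

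In that block, recursively set $\gamma_1^*:=\cmpscr{Inf}_{n-1}(\Ordlog(A_i),d_i)$, and return $\beta_i$ if $\gamma_1^*=0$, otherwise $\beta_i+\omega^{\gamma_1^*}$. The justification is that $\beta_i\notin\mathrm{Lim}$ forces $\ordlog(\beta_i)=0$, so a tuple with $\gamma_0=\beta_i$ is realizable iff some element of $\ev^{n-1}_{\Ordlog(A_i)}(d_i)$ has first coordinate $0$, i.e.\ iff $\gamma_1^*=0$; and if $\gamma_1^*>0$, then $\beta_i+\omega^{\gamma_1^*}$ is the smallest $\gamma_0\ge\beta_i$ whose Cantor normal form ends in an exponent $\ge\gamma_1^*$, and Lemma \ref{Ordlog_lemma}(\ref{Ordlog_lemma_item3}) furnishes some $\gamma\in A_i$ with $\ordlog(\gamma)=\gamma_1^*$, whence $\beta_i+\omega^{\gamma_1^*}\le\gamma<\gamma_i$, so $\beta_i+\omega^{\gamma_1^*}\in A_i$. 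Optimality is secured by noting that any $\gamma_0\in[\beta_i,\beta_i+\omega^{\gamma_1^*})$ satisfies $\ordlog(\gamma_0)<\gamma_1^*$ and so cannot host any witness.

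The $\cmp$-bound comes from the Cantor normal form estimate $\cmp(\beta_i+\omega^{\gamma_1^*})\le\cmp(\beta_i)+\cmp(\gamma_1^*)$, combined with the inductive hypothesis $\cmp(\gamma_1^*)\le\ordcmp^{n-1}_{\Ordlog(A_i)}(d_i)$ and the additive decomposition of $\ordcmp^n_\alpha(c)$ as a max over the $\cmp(\beta_j)$'s plus a max over the $\ordcmp^{n-1}_{\Ordlog(A_j)}(d_j)$'s. The running time is dominated by at most $m\le\width^n_\alpha(c)$ emptiness checks and one recursive $\cmpscr{Inf}_{n-1}$ call, each bounded by the corresponding lemma and Lemma \ref{cmpest_lemma}, plus an $\Olarge(\cmp(\beta_i)+\cmp(\gamma_1^*))$ ordinal addition; this fits inside the target $\Olarge(\max(\ordcmp^n_\alpha(c),\cmp(\alpha))\cdot\width^n_\alpha(c)^n)$. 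The main obstacle is verifying both the optimality of $\beta_i+\omega^{\gamma_1^*}$ inside $A_i$ and that the Cantor normal form arithmetic conforms exactly to the additive definition of $\ordcmp$, so that no slack is lost in the $\cmp$-bound.
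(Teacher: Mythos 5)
Your proposal is correct and follows essentially the same route as the paper: scan for the first block whose sub-code is non-empty via $\cmpscr{IsEmp}_{n-1}$, recurse to get $\delta=\cmpscr{Inf}_{n-1}(\Ordlog(A_k),d_k)$, and return $\beta_k$ if $\delta=0$ and $\beta_k+\omega^{\delta}$ otherwise, with the identical $\cmp$-bound via $\cmp(\beta_k)+\cmp(\delta)\le\ordcmp^n_\alpha(c)$. You in fact spell out the minimality argument (that $\beta_k+\omega^{\delta}$ is the least element of $A_k$ with $\ordlog$ at least $\delta$, and that it lies in $A_k$ by Lemma \ref{Ordlog_lemma}) more explicitly than the paper does.
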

\begin{proof}
 We prove the Lemma by induction on $n$. Suppose an input $(\alpha,c)$ is given, $c=(m,\overline{A},\overline{d})\in\codes^n_\alpha$, and for every $i<m$, $A_i=[\beta_i,\gamma_i)$. We choose the minimal $k$ such that $\cmpscr{IsEmp}_n(d_k)=0$ (recall that it mean that $\ev^n_\alpha(d_k)\ne \emptyset$); such a $k$ exists because of $\ev^n_\alpha(c)\ne\emptyset$. In the case of $n=1$ we return $\beta_k$ as the result of $\cmpscr{Inf}_n(\alpha,c)$. From the Lemma \ref{cmpest_lemma} it follows that $\cmp(\cmpscr{Inf}_n(\alpha,c))\le\ordcmp^n_\alpha(c)$. The required algorithm running time upper bound obviously holds. Now we consider the case of $n>1$. Let $\delta=\cmpscr{Inf}_{n-1}(\Ordlog(A_k),d_k)$. If $\delta=0$ then we put $\cmpscr{Inf}_n(\alpha,c)=\beta_k$. Otherwise we put $\cmpscr{Inf}_n(\alpha,c)=\beta_k+\omega^\delta$. From the inductive hypothesis it follows that \ref{infinum_lemma_item3}. and \ref{infinum_lemma_item5}. holds. By inductive hypothesis we have $$
\begin{aligned}\cmp(\cmpscr{Inf}_n(\alpha,c))&=\cmp(\beta_k)+\cmp(\cmpscr{Inf}_{n-1}(\Ordlog(A_k),d_k))\le \cmp(\beta_k)+\ordcmp(d_k)\\&\le \max\limits_{i<m}\cmp(\beta_i)+\max\limits_{i<m}\ordcmp(d_i)=\ordcmp(c)\end{aligned}$$  
\end{proof}

\begin{lemma} Suppose $n> 0$ and $k<n$. Then there exists a computable function $\cmpscr{RInv}_{n,k}(\alpha,c)$ such that for arguments $0<\alpha<\varepsilon_0$ and $c\in \codes^n_\alpha$:
\begin{enumerate}
\item $\cmpscr{RInv}_{n,k}(\alpha,c)$ is an element of $\codes^n_\alpha$;
\item $\ev^n_\alpha(\cmpscr{RInv}_{n,k}(\alpha,c))=\{w\in U^n_\alpha\mid \exists w'\in \ev^n_\alpha(c) (w R_n w')\}$;
\item $\ordcmp^n_\alpha(\cmpscr{RInv}_{n,k}(\alpha,c))\le \ordcmp^n_\alpha(c)+n$, $\width^n_\alpha(\cmpscr{RInv}_{n,k}(\alpha,c))\le \width^n_\alpha(c)+1$;
\item $\cmpscr{RInv}_{n,k}(\alpha,c)$ running time is $\Olarge(\max(\ordcmp^n_\alpha(c),\cmp(\alpha))\cdot(\width^n_\alpha(c))^n)$. 
\end{enumerate}
\end{lemma}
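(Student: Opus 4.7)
The plan is to construct $\cmpscr{RInv}_{n,k}$ by induction on $n$, handling the case $k = 0$ directly and reducing the case $k \geq 1$ to $\cmpscr{RInv}_{n-1,k-1}$. The structural point is that when $k \geq 1$ the relation $R_k$ preserves the first coordinate, so the preimage acts fiberwise over $w_0$; when $k = 0$ only the infimum of first coordinates of $\ev^n_\alpha(c)$ matters.

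\textbf{Case $k = 0$.} I would first invoke $\cmpscr{IsEmp}_n$ and return $\cmpscr{EmpS}_n(\alpha)$ when $\ev^n_\alpha(c) = \emptyset$. Otherwise compute $\delta := \cmpscr{Inf}_n(\alpha,c)$; since the ordinals are well-ordered, $R_0^{-1}[\ev^n_\alpha(c)] = \{w \in U^n_\alpha \mid w_0 > \delta\}$. If $\delta + 1 \geq \alpha$ return $\cmpscr{EmpS}_n(\alpha)$; otherwise return the two-interval code with partition $([0,\delta+1),\,[\delta+1,\alpha))$, first subcode $\cmpscr{EmpS}_{n-1}(\Ordlog([0,\delta+1)))$ and second subcode $\cmpscr{Cmpl}_{n-1}$ applied to $\cmpscr{EmpS}_{n-1}(\Ordlog([\delta+1,\alpha)))$ (encoding all of $U^{n-1}_{\Ordlog([\delta+1,\alpha))}$). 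Both endpoints $0$ and $\delta+1$ are non-limit, since $\delta+1$ is a successor, so the code is well-formed. Using $\cmp(\delta+1) \leq \ordcmp^n_\alpha(c)+1$ from Lemma \ref{Infinum_lemma} together with $\ordcmp = n-1$ and $\width = 1$ for each subcode, we obtain $\ordcmp \leq \ordcmp^n_\alpha(c)+n$ and $\width \leq 2 \leq \width^n_\alpha(c)+1$.

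\textbf{Case $k \geq 1$.} Write $c = (m,\overline{A},\overline{d})$, set $e_i := \cmpscr{RInv}_{n-1,k-1}(\Ordlog(A_i),d_i)$ for each $i<m$, and return $(m,\overline{A},\overline{e})$. Correctness amounts to showing that for every $w_0 \in A_i$ the slice of $R_k^{-1}[\ev^n_\alpha(c)]$ at $w_0$ equals $\ev^{n-1}_{\Ordlog(A_i)}(e_i) \cap U^{n-1}_{\ordlog(w_0)+1}$. Given the inductive hypothesis, this reduces to the identity $R_{k-1}^{-1}[X] \cap U^{n-1}_\beta = R_{k-1}^{-1}[X \cap U^{n-1}_\beta]$ whenever $X \subseteq U^{n-1}_{\Ordlog(A_i)}$ and $\beta = \ordlog(w_0)+1 \leq \Ordlog(A_i)$; it holds because $v R_{k-1} v'$ forces $v'_0 \leq v_0$, so $v'$ stays in $U^{n-1}_\beta$ whenever $v$ does. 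The width and ordcmp bounds follow directly from the inductive hypothesis: $\width \leq \width^n_\alpha(c)+1$ and $\ordcmp \leq \ordcmp^n_\alpha(c)+n$.

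\textbf{Running time.} In the case $k = 0$ the cost is dominated by $\cmpscr{Inf}_n$, giving $\Olarge(\max(\ordcmp^n_\alpha(c),\cmp(\alpha))\cdot\width^n_\alpha(c)^n)$. In the case $k \geq 1$ there are $m \leq \width^n_\alpha(c)$ recursive calls; by Lemma \ref{cmpest_lemma} each has $\ordcmp(d_i) \leq \ordcmp^n_\alpha(c)$, $\cmp(\Ordlog(A_i)) \leq \max(\cmp(\alpha),\ordcmp^n_\alpha(c))$ and $\width(d_i) \leq \width^n_\alpha(c)$, so each call costs $\Olarge(\max(\ordcmp^n_\alpha(c),\cmp(\alpha))\cdot\width^n_\alpha(c)^{n-1})$ and the total matches the stated bound. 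The main technical delicacy I anticipate is the $k=0$ construction, where the set $\{w \mid w_0 > \delta\}$ must be realized inside the rigid interval-partition format of codes and one has to keep track of the non-limit endpoint condition; once that is settled, the identity underlying the $k \geq 1$ case is a routine check on how $R_{k-1}$ interacts with the nested family $U^{n-1}_{\ordlog(w_0)+1} \subseteq U^{n-1}_{\Ordlog(A_i)}$.
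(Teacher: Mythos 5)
Your construction is essentially identical to the paper's: the same two-interval code $([0,\delta+1),[\delta+1,\alpha))$ with empty and full subcodes for $k=0$, and the same fiberwise reduction to $\cmpscr{RInv}_{n-1,k-1}$ on each $A_i$ for $k\ge 1$, with matching complexity accounting. You even handle the degenerate subcase $\delta+1=\alpha$ and spell out the correctness check for $k\ge 1$, which the paper leaves implicit, so there is nothing to correct.
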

\begin{proof} We will prove this Lemma by induction on $k$. 

We consider the case of $k=0$. Suppose an ordinal $\alpha$ and a code $c\in\codes^n_\alpha$ are given. If $\cmpscr{IsEmp}(\alpha,c)=1$, i.e. $\ev^n_\alpha(c)$ is empty, then we put $$\cmpscr{RInv}_{n,k}(\alpha,c)=\cmpscr{EmpS}_n(\alpha).$$ Otherwise, we put $$\cmpscr{RInv}_{n,k}(\alpha,c)=(2,(B_0,B_1),(e_0,e_1)),$$ where $B_0=[0,\cmpscr{Inf}_n(\alpha,c)+1)$, $B_1=[\cmpscr{Inf}_n(\alpha,c)+1,\alpha)$, $e_0=\cmpscr{EmpS}_{n-1}(\Ordlog(B_0))$, and $e_1=\cmpscr{Cmpl}_n(\Ordlog(B_1),\cmpscr{EmpS}_{n-1}(\Ordlog(B_1)))$.

Now we consider the case of $k>0$. Suppose  an ordinal $\alpha>0$ and a code $c=(m,\overline{A},\overline{d})\in\codes^n_\alpha$ are given. For $i<m$ let $e_i=\cmpscr{RInv}_{n-1,k-1}(\Ordlog(A_i),d_i)$. We put $\cmpscr{RInv}_{n,k}(\alpha,c)=(m,\overline{A},\overline{e})$. All required conditions on function $\cmpscr{RInv}_{n,k}$ can be checked easily; inductive hypothesis, Fact \ref{U_n_alpha_fact}, Lemma \ref{Infinum_lemma}, and Lemma \ref{Emptyset_lemma} are used here.  
\end{proof}

\begin{lemma} Suppose $n\ge 0$. Then there exists a computable function $\cmpscr{Rstr}_{n}(\alpha,\beta,c)$ such that for arguments $0<\beta\le\alpha<\varepsilon_0$ and $c\in \codes^n_\alpha$:
\begin{enumerate}
\item $\cmpscr{Rstr}_{n}(\alpha,\beta,c)$ is an element of $\codes^n_\beta$;
\item $\ev^n_\alpha(\cmpscr{Rstr}_{n}(\alpha,\beta,c))=\ev^n_\alpha(c)\cap U^n_\beta$;
\item $\ordcmp^n_\beta(\cmpscr{Rstr}_{n}(\alpha,\beta,c))\le \ordcmp^n_\alpha(c)$, $\width^n_\beta(\cmpscr{Rstr}_{n}(\alpha,\beta,c))\le \width^n_\alpha(c)$;
\item $\cmpscr{Rstr}_{n}$ running time is $\Olarge(\max(\ordcmp^n_\alpha(c),\cmp(\alpha),\cmp(\beta))\cdot(\width^n_\alpha(c))^n)$. 
\end{enumerate}
\end{lemma}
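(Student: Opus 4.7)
My plan is to define $\cmpscr{Rstr}_n$ by induction on $n$, mirroring the inductive structure of the codes themselves. For $n = 0$ we have $U^0_\alpha = U^0_\beta = \{()\}$ and $\codes^0_\alpha = \codes^0_\beta = \{0, 1\}$, so I would simply set $\cmpscr{Rstr}_0(\alpha, \beta, c) = c$; all four conditions hold trivially.

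For the inductive step, suppose $c = (m, \overline{A}, \overline{d})$ with $A_i = [\beta_i, \gamma_i)$ partitioning $[0, \alpha)$. Since $\beta_0 = 0 < \beta \le \alpha = \gamma_{m-1}$, there is a unique $k < m$ with $\beta_k < \beta \le \gamma_k$; I would locate it by scanning $i = 0, 1, \ldots$ and comparing $\beta$ with $\gamma_i$. Then I would output $(k+1, \overline{A'}, \overline{d'})$ where $A'_i = A_i$ and $d'_i = d_i$ for $i < k$, while $A'_k = [\beta_k, \beta)$ and $d'_k = \cmpscr{Rstr}_{n-1}(\Ordlog(A_k), \Ordlog(A'_k), d_k)$. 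Here $\Ordlog(A'_k)$ is computed using Lemma \ref{Ordlog_lemma}, which applies because $\beta_k$ is non-limit by the code well-formedness condition, so $\ordlog(\beta_k) = 0$. In the degenerate case $\beta = \gamma_k$ one has $A'_k = A_k$ and the recursive call is effectively a no-op.

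To check correctness, the new intervals tile $[0, \beta)$, the left endpoints $\beta_0, \ldots, \beta_k$ are inherited non-limits, and $d'_k \in \codes^{n-1}_{\Ordlog(A'_k)}$ by induction, so the output is a valid element of $\codes^n_\beta$. For the evaluation, a sequence $(\delta_0, \ldots, \delta_{n-1}) \in U^n_\beta$ lies in $\ev^n_\alpha(c)$ iff $\delta_0 \in A_i$ for some $i$ and the tail lies in $\ev^{n-1}_{\Ordlog(A_i)}(d_i)$; since $\delta_0 < \beta$, the relevant $i$ must be $\le k$, and in the $i = k$ case Lemma \ref{Ordlog_lemma} item \ref{Ordlog_lemma_item3} combined with the inductive hypothesis on $\cmpscr{Rstr}_{n-1}$ handles the truncated subcode. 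The width and $\ordcmp$ bounds follow from $k+1 \le m$, the fact that the new left endpoints are among the original ones, and the inductive bounds on the restricted subcode (note $\ordcmp$ is defined as a sum of two maxima, each of which weakly decreases under restriction). The running time decomposes into $\Olarge(m \cdot (\ordcmp^n_\alpha(c) + \cmp(\beta)))$ for the scan plus one recursive call which, using Lemma \ref{cmpest_lemma} to bound $\cmp(\Ordlog(A_k))$ and $\ordcmp^{n-1}(d_k)$ and Lemma \ref{Ordlog_lemma} to bound $\cmp(\Ordlog(A'_k))$, yields the claimed upper bound after absorbing factors of $\width^n_\alpha(c)$.

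The main care required is the bookkeeping in the translation between the complexity parameters of the outer call and those of the recursive call: specifically, confirming via Lemma \ref{Ordlog_lemma} that $\cmp(\Ordlog(A'_k)) \le \cmp(\beta)$, via Lemma \ref{cmpest_lemma} that $\cmp(\Ordlog(A_k)) \le \max(\cmp(\alpha), \ordcmp^n_\alpha(c))$, and that $\ordcmp^{n-1}(d_k) \le \ordcmp^n_\alpha(c)$. With these three estimates in hand, the remainder of the verification is a routine combination of the inductive hypothesis and the arithmetic of the two complexity measures.
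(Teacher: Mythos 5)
Your proof is correct and takes essentially the same route as the paper's: the identity map at $n=0$, and for $n>0$ the truncation of the last interval meeting $[0,\beta)$ together with a recursive restriction of its subcode to $\codes^{n-1}_{\Ordlog([\beta_k,\beta))}$. You also supply the verification the paper dismisses as a ``straightforward check,'' and you implicitly correct the paper's typo in the recursive call, which should indeed read $\cmpscr{Rstr}_{n-1}(\Ordlog(A_k),\Ordlog([\gamma,\beta)),d_k)$ rather than $\cmpscr{Rstr}_{n}(\Ordlog(A_k),\Ordlog([\gamma,\beta)),c)$.
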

\begin{proof}We prove the Lemma by induction on $n$. The case of $n=0$ is trivial:
$$\cmpscr{Rstr}_{0}\colon (\alpha,\beta,c)\longmapsto c.$$ 
Suppose $n>0$. Consider an input $(\alpha,\beta,c)$, where $c=(m,\overline{A},\overline{d})\in\codes^n_\alpha$. We choose maximal $k<m$ such that the left end of $A_k$ is less than $\beta$; obviously, at least one such $k$ exists. Suppose $A_k=[\gamma,\delta)$. We are going to define vectors $\overline{B}$ and $\overline{e}$ consists of $B_0,\ldots,B_{k}$ and $e_0,\ldots,e_{k}$ correspondingly. We put $B_i=A_i$ and $e_i=d_i$ for $i<k$. We put $B_k=[\gamma,\beta)$ and $e_k=\cmpscr{Rstr}_n(\Ordlog(A_k),\Ordlog([\gamma,\beta)),c)$. Finally, we put $\cmpscr{Rstr}_{n}(\alpha,\beta,c)=(k+1,\overline{B},\overline{e})$. Straightforward check shows that this $\cmpscr{Rstr}_{n}$ enjoys all required conditions.
\end{proof}

\begin{lemma} \label{intersection_lemma} Suppose $n\ge 0$. Then there exists a computable function $\cmpscr{Intr}_{n}(\alpha,c_1,c_2)$ such that for arguments $0<\alpha<\varepsilon_0$ and  $c_1,c_2\in \codes^n_\alpha$:
\begin{enumerate}
\item $\cmpscr{Intr}_{n}(\alpha,c_1,c_2)$ is an element of $\codes^n_\alpha$;
\item $\ev^n_\alpha(\cmpscr{Intr}_{n}(\alpha,c_1,c_2))=\ev^n_\alpha(c_1)\cap \ev^n_\alpha(c_2)$;
\item $\ordcmp^n_\alpha(\cmpscr{Intr}_{n}(\alpha,c_1,c_2))\le \ordcmp^n_\alpha(c_1)+\ordcmp^n_\alpha(c_2)$;
\item $\width^n_\alpha(\cmpscr{Intr}_{n}(\alpha,c_1,c_2))\le \width^n_\alpha(c_1)+\width^n_\alpha(c_2)$;
\item $\cmpscr{Intr}_{n}(\alpha,c_1,c_2)$ running time is $\Olarge(\max(\ordcmp^n_\alpha(c_1),\ordcmp^n_\alpha(c_2),\cmp(\alpha)) \\ (\max(\width^n_\alpha(c_1),\width^n_\alpha(c_2)))^{n+1})$. 
\end{enumerate}
\end{lemma}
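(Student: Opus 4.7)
The plan is to proceed by induction on $n$, following the same template as the earlier constructive lemmas in the section. For $n=0$, a code in $\codes^0_\alpha$ is just $0$ or $1$, so the intersection is computed in constant time by returning $\min(c_1,c_2)$.

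For the inductive step, write $c_1=(m_1,\overline{A^{(1)}},\overline{d^{(1)}})$ and $c_2=(m_2,\overline{A^{(2)}},\overline{d^{(2)}})$. The key step is to pass to the common refinement of the two interval partitions of $[0,\alpha)$: merge the sorted lists of left endpoints of the $A^{(1)}_i$ and $A^{(2)}_i$ to produce breakpoints $0=\gamma_0<\gamma_1<\ldots<\gamma_k=\alpha$ with $k\le m_1+m_2-1$, and set $B_j=[\gamma_j,\gamma_{j+1})$. Each $B_j$ is contained in a unique $A^{(1)}_{i_1(j)}$ and a unique $A^{(2)}_{i_2(j)}$, and since each left endpoint is drawn from those appearing in $c_1$ or $c_2$, the non-limit requirement in the definition of $\codes^n_\alpha$ is automatically inherited.

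For each $B_j$ I would first restrict the two relevant sub-codes down to $\Ordlog(B_j)$ by setting $e^{(1)}_j=\cmpscr{Rstr}_{n-1}(\Ordlog(A^{(1)}_{i_1(j)}),\Ordlog(B_j),d^{(1)}_{i_1(j)})$ and analogously $e^{(2)}_j$; this is legal because $B_j\subseteq A^{(1)}_{i_1(j)}$, and item \ref{Ordlog_lemma_item3} of Lemma \ref{Ordlog_lemma} then forces $\Ordlog(B_j)\le\Ordlog(A^{(1)}_{i_1(j)})$. Then I recursively intersect: $f_j=\cmpscr{Intr}_{n-1}(\Ordlog(B_j),e^{(1)}_j,e^{(2)}_j)$, and return $(k,\overline{B},\overline{f})$. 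The equality $\ev^n_\alpha(\cmpscr{Intr}_n(\alpha,c_1,c_2))=\ev^n_\alpha(c_1)\cap\ev^n_\alpha(c_2)$ then unfolds directly from the block-wise definition of $\ev^n_\alpha$ together with the specifications of $\cmpscr{Rstr}_{n-1}$ and the inductive hypothesis.

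Finally I must verify the three complexity bounds. The width bound is immediate from $k\le m_1+m_2\le\width^n_\alpha(c_1)+\width^n_\alpha(c_2)$ combined with the inductive bound on each $f_j$. The $\ordcmp$ bound follows from the fact that every breakpoint $\gamma_j$ coincides with some breakpoint of $c_1$ or $c_2$, so $\cmp(\gamma_j)\le\ordcmp^n_\alpha(c_1)+\ordcmp^n_\alpha(c_2)$, together with the inductive ordcmp bound on each $f_j$. The running time is the step that will require the most care: setting $\Omega=\max(\ordcmp^n_\alpha(c_1),\ordcmp^n_\alpha(c_2),\cmp(\alpha))$ and $W=\max(\width^n_\alpha(c_1),\width^n_\alpha(c_2))$, the endpoint merge costs $O(W\cdot\Omega)$ for sorting and comparing, each of the $O(W)$ restriction calls costs $O(\Omega\cdot W^{n-1})$, and each of the $O(W)$ recursive intersection calls costs $O(\Omega\cdot W^n)$ by the inductive hypothesis. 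Lemma \ref{cmpest_lemma} is invoked throughout to bound all sub-code complexities by $\Omega$ and $W$. Summing yields the claimed $O(\Omega\cdot W^{n+1})$ bound.
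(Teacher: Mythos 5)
Your proposal is correct and follows essentially the same route as the paper: induction on $n$, passing to the common refinement of the two interval partitions, restricting the sub-codes via $\cmpscr{Rstr}_{n-1}$ to the $\Ordlog$ of each refined block, and intersecting recursively, with the complexity bounds controlled via Lemma \ref{cmpest_lemma}. The only (immaterial) difference is that you build the refinement by merging the sorted endpoint lists in linear time, whereas the paper forms all pairwise intersections and sorts them with a quadratic comparison count; both fit within the claimed $\Olarge(\Omega\cdot W^{n+1})$ bound.
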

\begin{proof} We prove this Lemma by induction on $n$. The case  $n=0$ is trivial. Now assume that $n>0$. Below we describe  $\cmpscr{Intr}_n$. Suppose an input $(\alpha,c_1,c_2)$ is given, code $c_1=(m_1,\overline{A^{(1)}},\overline{d^{(1)}})\in \codes^n_\alpha$, and code $c_2=(m_2,\overline{A^{(2)}},\overline{d^{(2)}})\in \codes^n_\alpha$ are given. We consider all pairwise intersections $A_i\cap B_j$, where $i<m_1$ and $j<m_2$; all intersections of this form are $\emptyset$ or $[\beta,\gamma)$ for some ordinals $\beta$ and $\gamma$, $\beta<\gamma$. We choose all non-empty intersections of considered form and then sort them in the order induced by the ordinal comparison of left ends of intervals; by this procedure we obtain sequence $B_0,B_1,\ldots,B_{k-1}$. Thus we obtain the vector $\overline{B}$ of the length $k$. Obviously, $\bigsqcup\limits_{i<k} B_i=[0,\alpha)$  and for all $i<k-1$, the right end of $B_i$ is equals to the left end of $B_{i+1}$. It is easy to see that $k\le m_1+m_2$. We are going to define vectors $\overline{e^{(1)}}$, $\overline{e^{(2)}}$, and $\overline{e^{(3)}}$ of the length $k$. For every $i<k$, we find a unique $j<m_1$ such that $B_i\subset A^{(1)}_j$ and then put $e^{(1)}_i=\cmpscr{Rstr}_{n-1}(\Ordlog(A^{(1)}_j),\Ordlog(B_i),d^{(1)}_j)$. Similarly, for every $i<k$, we find a unique $j<m_2$ such that $B_i\subset A^{(2)}_j$ and then put $e^{(2)}_i=\cmpscr{Rstr}_{n-1}(\Ordlog(A^{(2)}_j),\Ordlog(B_i),d^{(2)}_j)$. For every $i<k$, we put $e^{(3)}_i=\cmpscr{Intr}_{n-1}(\Ordlog(B_i),e^{(1)},e^{(2)})$. And finally, we put $\cmpscr{Intr}_{n}(\alpha,c_1,c_2)=(k,\overline{B},\overline{e^{(3)}})$. With the use of inductive hypothesis it is easy to check that all conditions on $\cmpscr{Intr}_n$ holds; note that we use quadratic upper bound for the number of comparison operations in the sort algorithm in order to give upper bound for running time.
\end{proof}

We define $|\varphi|$ for all polymodal formulas $\varphi$: 
\begin{itemize}
\item $|\top|=|\bot|=|x|=1$;
\item $|\varphi\land \psi|=|\varphi \lor \psi|=|\varphi\to\psi|=|\varphi|+|\psi|+1$;
\item $|\diamn{n}\varphi|=|\varphi|+1$.
\end{itemize}

Now we will prove the Theorem \ref{n_fragment}.

\begin{proof} We consider some formula $\varphi\in L^n_0$. Obviously, within time $\Olarge(|\varphi|)$  we can find a formula $\varphi'\in L^n_0$ such that $$\GLP\vdash \varphi\;\longleftrightarrow\;\varphi',$$  the only connectives that are used in $\varphi'$ are $\land,\lnot,\diamn{0},\ldots,\diamn{n-1}$, and $|\varphi'|=\Olarge(|\varphi|)$. For every subformula $\psi$ of $\varphi'$ we will find a code $c_{\psi}\in\codes^n_{\omega_n}$ such that $\ev^n_{\omega_n}(c_{\psi})=\{w\in U^n_{\omega_n}\mid \mathcal{U}^n_{\omega_n},w\Vdash \psi\}$. We consider subformulas of $\varphi'$ in an order such  that every subformula $\psi$ is considered after all strict subformulas of $\psi$. If $\psi$ is $\bot$ then $c_{\psi}=\cmpscr{EmpS}_n(\omega_n)$.  If $\psi$ is $\lnot\psi'$ for some $\psi'$  then $c_{\psi}=\cmpscr{Cmpl}_n(\omega_n,c_{\psi'})$. If $\psi$ is $\diamn{k}\psi'$ for some $\psi'$ then $c_{\psi}=\cmpscr{RInv}_{n,k}(\omega_n,c_{\psi'})$. If $\psi$ is $\psi'\land \psi''$ for some $\psi'$ and some $\psi''$ then $c_{\psi}=\cmpscr{Intr}_{n}(\omega_n,c_{\psi'},c_{\psi''})$. We easily show by induction on length show that for every subformula $\psi$, we have
$$\width^n_{\omega_n}(c_{\psi})\le |\psi|\mbox{ and}$$
$$\ordcmp^n_{\omega_n}(c_{\psi})\le |\psi|\cdot n.$$
Thus the calculation of $c_{\psi}$ with the use of codes for previously considered subformulas takes time $\Olarge(|\varphi'|\cdot |\varphi'|^{n+1})$. And complete process of calculation of $c_{\phi'}$ takes time $\Olarge(|\varphi'|^{n+3})$.

Obviously, $\GLP\vdash \varphi$ iff $\cmpscr{IsEmp}_n(\omega_n,\cmpscr{Cmpl}_n(\omega_n,c_{\varphi'}))=1$. This gives us a required polynomial time algorithm with running time $\Olarge(|\varphi|^{n+3})$.
\end{proof}

\begin{remark} With sharper formulation and proof of the Lemma \ref{intersection_lemma} we may obtain  $\cmpscr{Intr}_n$ with running time $\Olarge(\max(\ordcmp^n_\alpha(c_1),\ordcmp^n_\alpha(c_2),\cmp(\alpha))\cdot(\max(\width^n_\alpha(c_1),\width^n_\alpha(c_2)))^{n})$; we don't present a proof of the last fact here. This will give a decision algorithm for the language $\GLP^n_0$ with better running time upper bound $\Olarge(|\varphi|^{n+2})$.
\end{remark} 

\section{Conclusion and perspectives}
E.~Dashkov considered the strongly positive fragment of $\GLP $. There is also the positive fragment of $\GLP $.
 For every $\alpha,\beta\le \omega$ we denote by $P^\alpha_\beta$ the set of all formulas of the form
$\varphi\longleftrightarrow \psi,$
where $\varphi$ and $\psi$ are built from the logical constant $\top$, conjunction, disjunction, modalities $\diamn{i}$ for $i<\alpha$, modalities $\boxn{i}$ for $i<\alpha$ and propositional variables $x_j$ for $j<\beta$. Consider $\GLP $-provability problems for formulas from $P^\alpha_\beta$, where either $\alpha=\omega$ or $\beta>0$. We conjecture that all these problems are $\textsc{PSPACE}$-complete.
 
If $n$ is large enough then the proof of the Theorem \ref{n_fragment} gives us highly ineffective decision algorithms for $\GLP ^n_0$. It is unknown are there effective algorithms for these problems. We conjecture that there are no uniform $N$ such that for every $n$ there is a decision algorithm for $\GLP^n_0$ with running time $\Olarge(|\varphi|^N)$. But from Theorem \ref{completeness_theorem} it follows that our conjecture implies $\textsc{PTIME}\ne \textsc{PSPACE}$. Thus if this conjecture holds then this problem seems to be very hard to solve without use of complexity-theoretic assumptions.

\bibliography{main}

\begin{thebibliography}{10}

\bibitem{Bek05-2}
L.~Beklemishev.
\newblock Veblen hierarchy in the context of provability algebras.
\newblock In {\em Logic, Methodology and Philosophy of Science, Proceedings of
  the Twelfth International Congress.}, pages 65--78. Kings College
  Publications, 2005.

\bibitem{Bek05}
L.D. Beklemishev.
\newblock Reflection principles and provability algebras in formal arithmetic.
\newblock {\em Russian Mathematical Surveys}, 60:197--268, 2005.

\bibitem{BekJooVer05}
Lev~D. Beklemishev, Joost~J. Joosten, and Marco Vervoort.
\newblock A finitary treatment of the closed fragment of {J}aparidze's
  provability logic.
\newblock {\em J. Log. Comput.}, 15(4):447--463, 2005.

\bibitem{ChagRyb02}
Alexander~V. Chagrov and Mikhail~N. Rybakov.
\newblock How many variables does one need to prove {PSPACE}-hardness of modal
  logics.
\newblock In {\em Advances in Modal Logic}, pages 71--82, 2002.

\bibitem{CookRec73}
Stephen~A. Cook and Robert~A. Reckhow.
\newblock Time bounded random access machines.
\newblock {\em J. Comput. Syst. Sci.}, 7(4):354--375, August 1973.

\bibitem{Dash12}
E.~Dashkov.
\newblock On the positive fragment of the polymodal provability logic {GLP}.
\newblock {\em Mathematical Notes}, 91:318--333, 2012.

\bibitem{Ign93}
K.N. Ignatiev.
\newblock On strong provability predicates and the associated modal logics.
\newblock {\em The Journal of Symbolic Logic}, 58(1):249--290, 1993.

\bibitem{Jap86}
G.K. Japaridze.
\newblock The modal logical means of investigation of provability.
\newblock {T}hesis in {P}hilosophy, in {R}ussian, Moscow, 1986.

\bibitem{Lad77}
Richard~E. Ladner.
\newblock The computational complexity of provability in systems of modal
  propositional logic.
\newblock {\em SIAM J. Comput.}, 6(3):467--480, 1977.

\bibitem{Shap08}
Ilya Shapirovsky.
\newblock {PSPACE}-decidability of {J}aparidze's polymodal logic.
\newblock In {\em Advances in Modal Logic}, pages 289--304, 2008.

\bibitem{Svej03}
V{\'\i}z{\v e}slav {\v S}vejdar.
\newblock The decision problem of provability logic with only one atom.
\newblock {\em Archive for Mathematical Logic}, 42(8):763--768, 2003.

\end{thebibliography}
\bibliographystyle{plain}
\end{document}